\newtheorem{remark}{Remark}[section]
\newtheorem{prop}{Proposition}[section]
\newtheorem{lemma}{Lemma}[section]
\newcommand{\Proof}{\textbf{Proof.}~}
\newcommand{\cvd}{\begin{flushright}$\square$\end{flushright}}
\newcommand{\V}{\textnormal{$\mathbf{v}$}}
\newcommand{\N}{\textnormal{$\mathbf{n}$}}
\newcommand{\X}{\textnormal{$\mathbf{x}$}}
\newcommand{\dO}{\,{\rm d}\Omega}
\newcommand{\dPP}{\,{\rm d}\PP}
\newcommand{\dpPP}{\,{\rm d}\partial\PP}
\newcommand{\de}{\,{\rm d}e}
\newcommand{\df}{\,{\rm d}f}
\newcommand{\PP}{{\text P}}
\newcommand\Om{{\Omega}}
\newcommand\cc{\boldsymbol c}
\newcommand\bPi{\boldsymbol \Pi}
\newcommand{\VFN}{V^\nabla_h(f)}
\newcommand{\VFNT}{\widetilde{V}^\nabla_h(f)}
\newcommand{\VF}{V_h^\Delta(f)}
\newcommand{\VFT}{\widetilde{V}^\Delta_h(f)}
\newcommand{\VFNGen}{V^\nabla_{h,k}(f)}
\newcommand{\VFNTGen}{\widetilde{V}^\nabla_{h,k}(f)}
\newcommand{\VFGen}{V_{h,k}^\Delta(f)}
\newcommand{\VFTGen}{\widetilde{V}^\Delta_{h,k}(f)}
\newcommand{\DZ}{\textbf{D0}}
\newcommand{\DU}{\textbf{D1}}
\newcommand{\CZ}{\mathcal{C}0}
\newcommand{\CU}{\mathcal{C}1}
\newcommand*\colvec[1]{
        \global\colveccount#1
        \begin{pmatrix}
        \colvecnext
}
\def\colvecnext#1{
        #1
        \global\advance\colveccount-1
        \ifnum\colveccount>0
                \\
                \expandafter\colvecnext
        \else
                \end{pmatrix}
        \fi
}
\newcommand{\coor}[1]{{#1}}
\begin{document}

\begin{frontmatter}

\title{A $C^1$ Virtual Element Method \\ on polyhedral meshes}

%% Group authors per affiliation:
\author[add1,add2]{L. Beir\~{a}o da Veiga}
\ead{lourenco.beirao@unimib.it}
\author[add1]{F. Dassi}
\ead{franco.dassi@unimib.it}
\author[add1,add2]{A. Russo}
\ead{alessandro.russo@unimib.it}

\address[add1]{Department of Mathematics and Applications, University of Milano - Bicocca,\\ 
Via Cozzi 53, I-20153, Milano (Italy)}
\address[add2]{IMATI-CNR, 27100 Pavia (Italy)}

%%%%%%%%%%%%%%%%%%%%%%%%%%%%%%%%%%%%%%%%%%%%%%%
% \input{abstract}
%%%%%%%%%%%%%%%%%%%%%%%%%%%%%%%%%%%%%%%%%%%%%%%

\begin{abstract}
The purpose of the present paper is to develop $C^1$ Virtual Elements in three dimensions for linear elliptic fourth order problems, motivated by the difficulties that standard conforming Finite Elements encounter in this framework. We focus the presentation on the lowest order case, the generalization to higher orders being briefly provided in the Appendix. The degrees of freedom of the proposed scheme are only 4 per mesh vertex, representing function values and gradient values. Interpolation error estimates for the proposed space are provided, together with a set of numerical tests to validate the method at the practical level.
\end{abstract}

\begin{keyword}
Virtual Element Method\sep polyhedral meshes\sep bi-Laplacian problem \sep $C^1$ regularity
\end{keyword}

\end{frontmatter}

%%%%%%%%%%%%%%%%%%%%%%%%%%%%%%%%%%%%%%%%%%%%%%%%%%%
%\input{intro}
%%%%%%%%%%%%%%%%%%%%%%%%%%%%%%%%%%%%%%%%%%%%%%%%%%%

\noindent 
\textsl{The copyright of the original paper is owned by Elsevier. 
The original publication (DOI 10.1016/j.camwa.2019.06.019) appears on the journal Computer \& Mathematics with Applications and can be found at the journal website}

\begin{center}
\url{https://www.journals.elsevier.com/computers-and-mathematics-with-applications}.
\end{center}

\section{Introduction}

Fourth order partial differential equations are used to describe many different physical phenomena such as plate bending problems and evolution of transition interfaces.
In standard $H^2$ conforming finite elements \coor{these} problems require a globally $C^1$ piecewise polynomial space and, to get such regularity on a general unstructured partition, a very high minimal polynomial degree is needed.
In~\cite{MR2983014} there is an analysis on the minimal degree required to build a finite element space in the Sobolev space $H^m(\mathbb{R}^d)$ via the Finite Element Method.
In particular, the authors show that for $H^2$ the minimal polynomial degree is 5 in two dimensions and 9 in three dimensions.
It is easy to understand that such compulsory high polynomial degree increases the computational effort and makes the method unpractical in many situations.
For instance, a conforming $C^1$ finite element space on a tetrahedral mesh will require 220 degrees of freedom per element~\cite{MR0350260}.
To avoid this high computational effort, there are possible alternatives in the literature, such as non-conforming and discontinuous 
schemes (see for instance \cite{morley1968,MR2142191,MR1915664,MR2295480}), making use of a mixed formulation (see for instance \cite{MixedBrezzi,Falk1978,MR2453216,MR899701}) or construct more complex discrete spaces obtained by some macro-element strategy (see for instance \cite{MR2817542,zhang2008,Lai2004}).
It must be mentioned that $C^1$ finite elements are also important because they can be used to build exact discrete Stokes complexes, see for instance \cite{N1,N2} and the citations thereof. 

Another strategy to get a conforming discrete approximation space in $H^2$ is to follow the recently born Virtual Element Method (VEM). The VEM is a novel generalization of the finite element method, introduced in \cite{volley, autostoppisti}, that allows to use general polygonal/polyhedral meshes and which has been already succesfully applied to a large number of problems 
(a very brief list being 
\cite{antonietti2014stream,
benedetto2014virtual,
gain2014virtual,
gain2015topology,
mora2015virtual,
da2015virtual,
benedetto2016hybrid,
cangiani2016nonconforming,
genCoeff,
wriggers2016virtual,
bertoluzza2017bddc,
Brenner,
caceres2017mixed,
Cangiani:apos,
vacca2018h,
fumagalli2018dual,
da2018virtual,
dassi2018exploring,
nguyen2018virtual,
da2018lowest,
ARTIOLI2018978,
artioli2018asymptotic,
Brenner2}).
The Virtual Element Method is not restricted to piecewise polynomials but avoids nevertheless the explicit integration of non-polynomial shape functions by a wise choice of the degrees of freedom and an innovative construction of the stiffness matrix. Although the main motivation of VEM is the use of general polytopal partitions, its flexibility can lead also to different advantages. One, initiated in \cite{Brezzi:Marini:plates,BM13} and further investigated in~\cite{ABSVu,Paper_con_Gonzalo,Mora1,Mora2}, is the possibility to develop $C^1$ conforming spaces, still keeping the accuracy order and the number of degrees of freedom at a reasonable level.
More specifically, the lowest degree requires only three degrees of freedom for each vertex
independently for the shape of the elements.

Since all the papers above are limited to the two-dimensional case, the purpose of the present contribution is to develop $C^1$ Virtual Elements in three dimensions. We focus the presentation on the lowest order case (the generalization to higher orders being briefly provided in the Appendix) for the sake of exposition but also since we believe this is the most interesting choice in practice. Developing a discrete Virtual Element space in three dimensions needs first the construction of ad-hoc two dimensional spaces on the faces (polygons) of the polyhedra, one for the function values and one for the normal derivatives. The final degrees of freedom of the proposed scheme are simply 4 per mesh vertex, representing function values and gradient component values. Consequently, although VEM can be applied to general polyhedral meshes, the proposed method becomes appealing also for standard tetrahedral meshes. After developing the method and the associated degrees of freedom, we prove interpolation estimates for the provided discrete space in standard $L^2,H^1$ and $H^2$ Sobolev norms. Finally, we show a set of numerical tests on classical linear fourth order elliptic problems that validate the method at the practical level. We also include a comparison, for the standard Poisson problem, with the $C^0$ VEM in 3D. It is finally worth noticing that, although the present paper is focused on the linear elliptic case, the presented discrete space could be used also to discretize more complex nonlinear problems, such as the Cahn-Hilliard equation.

The paper is organized as follows. In Section~\ref{sec:prob} we outline the range of fourth order linear problems under consideration.
In Section~\ref{sec:spaces} we describe the $C^1$ virtual element spaces and provide a set of associated degrees of freedom. In Section~\ref{sec:virtForms} we present the numerical method, that is the VE discretization of the problem. In Section~\ref{sec:interpo}, we prove the interpolation and convergence estimates.
In Section~\ref{sec:numExe}, we present the numerical results.
Finally, in~\ref{appendix:general} we briefly outline the extension to the higher order case.

%%%%%%%%%%%%%%%%%%%%%%%%%%%%%%%%%%%%%%%%%%%%%%%%%%%%%%%%%%%%%%%%
% \input{problem}
%%%%%%%%%%%%%%%%%%%%%%%%%%%%%%%%%%%%%%%%%%%%%%%%%%%%%%%%%%%%%%%%

\section{Continuous Problem}\label{sec:prob}

Let $\Omega\subset\mathbb{R}^3$ be a bounded domain,
we consider the following problem: find $u(\X):\Omega\to \mathbb{R}$ such that 
\begin{equation}
\left\{
\begin{array}{rlll}
c_1\Delta^2 u - c_2\,\Delta u + c_3\,u  &=&\, f\phantom{_1}\quad\quad&\textnormal{in }\Omega\\[0.2em]
u &=&\,g_1\quad\quad&\textnormal{on }\partial\Omega\\
\partial_n u &=&\,g_2\quad\quad&\textnormal{on }\partial\Omega
\end{array}
\right.,
\label{eqn:probGenStrong}
\end{equation}
where $c_1>0$, $c_2,c_3\geq 0$ are constant coefficients, 
$f\in L^2(\Omega)$ is the forcing term, 
$\partial_n u$ is the partial derivative of $u$ with respect to the boundary normal $\N$,
$g_1\in H^{3/2}(\partial\Omega)$ and $g_2\in H^{1/2}(\partial\Omega)$ are the Dirichlet data.
Note that we have considered Dirichlet boundary conditions only for simplicity of exposition,
the extension to more general cases is trivial.
To define the variational formulation of Problem~\eqref{eqn:probGenStrong},
we introduce the bilinear forms
\begin{equation}
\begin{array}{rll}
a^\Delta(v,\,w) &:= \mathlarger{\int}_\Omega \nabla^2 v\,:\,\nabla^2 w\,\dO  &\quad\forall v,\,w\in H^2(\Omega)\,,\\[1em]
a^\nabla(v,\,w) &:= \mathlarger{\int}_\Omega \nabla v\,\cdot\,\nabla w\,\dO  &\quad\forall v,\,w\in H^1(\Omega)\,, \\[1em]
a^0(v,\,w) &:= \mathlarger{\int}_\Omega v\,w\,\dO  &\quad\forall v,\,w\in L^2(\Omega)\,,
\end{array}
\label{eqn:biliForms}
\end{equation}
where all the previous symbols refers to the standard notation for functional spaces.
We define
\begin{equation*}
V(\Omega) := \big\{v\in H^2(\Omega)\,:\,\,v = g_1\quad\text{and}\quad\partial_n u = g_2\quad\textnormal{on }\partial\Omega\big\}\,,
\end{equation*}
and
\begin{equation*}
V_0(\Omega) := \big\{v\in H^2(\Omega)\,:\,\,v = 0\quad\text{and}\quad\partial_n u = 0\quad\textnormal{on }\partial\Omega\big\}\,.
\end{equation*}
The weak formulation of Problem~\eqref{eqn:probGenStrong} reads: find $u\in V(\Omega)$ such that
\begin{equation}
c_1\,a^\Delta(u,\,v) + c_2\,a^\nabla(u,\,v) + c_3\,a^0(u,\,v) = (f,\,v)_\Om\quad\quad\quad\forall v\in V_0(\Omega)\,,
\label{eqn:probGenWeak}
\end{equation}
where $(\cdot,\cdot)_\Om$ is the standard $L^2$-inner product.
\coor{Due to the coercivity of $a^\Delta(\cdot,\cdot)$ on the space $V_0(\Omega)$ the Lax-Milgram lemma yields the well posedness of the above problem.}

\begin{remark}
The method proposed in this paper can be applied also to second order elliptic problems (that is $c_1=0$ and $c_2>0$)
to get a $C^1$ conforming solution, as shown in the numerical test \coor{in Section}~\ref{sub:c0Comp}.
\end{remark}

\begin{remark}
The method of the present paper can be extended to the variable coefficient case by combining this construction with the approach in~\cite{genCoeff,chinaSere}.
\end{remark}

%%%%%%%%%%%%%%%%%%%%%%%%%%%%%%%%%%%%%%%%%%%%%%%%%%%%%%%%%%%%%
% \input{c1Spaces}
%%%%%%%%%%%%%%%%%%%%%%%%%%%%%%%%%%%%%%%%%%%%%%%%%%%%%%%%%%%%%

\section{$C^1$ Virtual Element Spaces}\label{sec:spaces}

Let $\Omega_h$ be a discretization of $\Omega$ composed by polyhedrons.
As in the standard VEM framework, we define the local space and projection operators in a generic polyhedron $P$
and then we glue such local virtual element spaces to define the discrete global space, $V_h(\Omega_h)$.

We achieve this goal in two steps. 
We first define virtual spaces on faces, \coor{Sections}~\ref{sub:VSOnFace1} and~\ref{sub:VSOnFace2},
then we define virtual spaces on polyhedrons \coor{in \coor{Section}~\ref{sub:VSOnPoly}}.
Since the virtual face spaces essentially correspond to 2D virtual spaces already defined in~\cite{volley,BM13,cinesi_plates},
we only make a brief review and refer to such papers for a deeper description.

In order to derive the convergence theory, we will need the following assumptions on the mesh $\Omega_h$:
\begin{itemize}
 \item[(\textbf{A1})] Each element $P$ is star shaped with respect to a ball $B_P$ 
 whose radius is uniformly comparable with the polyhedron diameter, $h_P$.
 \item[(\textbf{A2})] Each face $f$ is star shaped with respect to a disc $B_f$
 whose radius is uniformly comparable with the face diameter, $h_f$.
 \item[(\textbf{A3})] Given a polyhedron $P$ all its edge lengths and face diameters are uniformly comparable with respect to its diameter $h_P$.
\end{itemize}

\begin{remark}
It is easy to check that under assumptions (A1), (A2) and (A3) each polyhedron is the union of uniformly 
shape-regular tetrahedrons all sharing a central vertex.
\label{L:tets}
\end{remark}

Let $\mathcal{D}\subset\mathbb{R}^d$, 
from now we refer to the polynomial space in $d$-variables of degree 
lower or equal to $k$ as $\mathbb{P}_k(\mathcal{D})$.

%%%%%%%%%%%%%%%%%%%%%%%%%%%%%%%%%%%%%%%%%%%%%%%%%%%%%%%%%%%%%
% \input{c1SpacesOnFace}
%%%%%%%%%%%%%%%%%%%%%%%%%%%%%%%%%%%%%%%%%%%%%%%%%%%%%%%%%%%%%

\subsection{Virtual element nodal space $\VFN$}\label{sub:VSOnFace1}

We define the preliminary space on each face $f\in\partial P$
\begin{eqnarray*}
\VFNT := \bigg\{v_h\in H^1(f)\:&:&\Delta_\tau\,v_h\in\mathbb{P}_0(f)\,, \\[-1em]
\phantom{\VFNT := \bigg\{v_h\in H^1(f)\:}&\phantom{:}&
v_h|_{\partial f}\in C^0(\partial f)\,,\:v_h|_e\in\mathbb{P}_1(e)\:\:\forall e\in\partial f\bigg\}\,. 
\end{eqnarray*}
where $\Delta_\tau$ is the Laplace operator in the local face variables.

We consider the standard VEM setting proposed in~\cite{volley} and 
we build the projection operator $\Pi^\nabla_f:\VFNT\to\mathbb{P}_1(f)$, 
defined by 
\begin{equation}
\left\{
\begin{array}{rll}
a_f^\nabla(\Pi^\nabla_f v_h\,, p_1) &=a_f^\nabla(v_h\,, p_1) &\forall p_1\in\mathbb{P}_1(f)\\[0.4em]
(\Pi^\nabla_f v_h,\,1)_{\partial f} &= (v_h,\,1)_{\partial f} \\
\end{array}\,,
\right.
\label{eqn:piNablaFace}
\end{equation}
where 
\begin{eqnarray*}
a_f^\nabla(v_h\,,w_h) &:=& \int_f \nabla_\tau v_h\,\cdot\nabla_\tau\,w_h\,\df\,,
\end{eqnarray*}
here $\nabla_\tau$ is the gradient operator in the local face coordinates and 
$(\cdot,\cdot)_{\partial f}$ is the standard $L^2$ inner product over the boundary of $f$.

The projection operator $\Pi^\nabla_f$ is well defined and 
uniquely determined by the values of the function $v_h$ at the vertices of the face $f$~\cite{volley}.

Moreover, starting from the space $\VFNT$ and the projection operator $\Pi^\nabla_f$, 
we are able to define the nodal space 
\begin{equation}
\VFN := \left\{ v_h\in\VFNT\::\:\int_f\Pi^\nabla_f v_h\df = \int_f v_h\df\right\}\,,
\label{eqn:nodalEnhanced}
\end{equation}
whose degrees of freedom are the values of $v_h$ at the vertices of $f$~\cite{volley,projectors}.

\subsection{Virtual element $C^1$ space $\VF$}\label{sub:VSOnFace2}

We start from the preliminary space
\begin{eqnarray*}
\VFT := \Bigg\{v_h\in H^2(f)\:&:&\: \Delta^2_\tau\,v_h\in\mathbb{P}_1(f),\\[-1em]
\:&\phantom{:}&\: v_h|_{\partial f}\in C^0(\partial f),\:\:v_h|_e\in\mathbb{P}_3(e)\:\:\forall e\in\partial f\,, \\[-0.5em]
\:&\phantom{:}&\: \nabla_\tau\,v_h|_{\partial f}\in [C^0(\partial f)]^2,\:\:
\partial_{n_e} v_h\in\mathbb{P}_1(e)\:\:\forall e\in\partial f\Bigg\}\,,
\end{eqnarray*}  
where $\partial_{n_e}v_h$ denotes the outward normal derivative to each edge.

We consider the projection operator $\Pi^\Delta_f:\VFT\to\mathbb{P}_2(f)$ defined by 
the following relations
\begin{equation}
\Bigg\{
\begin{array}{rll}
a_f^\Delta\left(\Pi^\Delta_f v_h\,, p_2\right) &=a_f^\Delta\left(v_h\,, p_2\right) &\forall p_2\in\mathbb{P}_2(f)\\[0.2em]
\left(\Pi^\Delta_f v_h\,, p_1\right)_{\partial f} &= \left(v_h\,, p_1\right)_{\partial f} &\forall p_1\in\mathbb{P}_1(f)\\
\end{array}\,,
\label{eqn:piDeltaFace}
\end{equation}
where  
\begin{eqnarray*}
a_f^\Delta\left(v_h\,,w_h\right) &:=& \int_f \nabla_\tau^2\,v_h\,:\nabla_\tau^2\,w_h\,\df\,,
\end{eqnarray*}
is a bilinear operator and $\nabla^2_\tau$ refers to the Hessian in the face local coordinates system.

The projection operator $\Pi^\Delta_f:\VFT\to\mathbb{P}_2(f)$ is well-defined 
and it is uniquely determined by the values of the function, $v_h(\nu)$,
and the values of the gradient, $\nabla_\tau v_h(\nu)$, at the face vertices~\cite{ABSVu,BM13,cinesi_plates}. 

We exploit the space $\VFT$ and the projection operator $\Pi^\Delta_f$ 
to define the virtual element $C^1$ space
\begin{equation}
\VF:= \left\{ v_h\in\VFT\::\:\int_f \Pi^\Delta_f v_h\,\,p_1\df = \int_f v_h\,\,p_1\df,
\quad\forall p_1\in\mathbb{P}_1(f)\right\}\,.
\label{eqn:c1Enhanced} 
\end{equation}
A set of degrees of freedom for $\VF$ is given by the function and the function gradient 
values at the face vertices~\cite{BM13,ABSVu}. 

\begin{remark}
We would like to underline that the additional properties on face integrals 
required by the spaces $\VFN$ and $\VF$, namely
\begin{equation}
\int_f v_h\df = \int_f\Pi^\nabla_f v_h\df \,,\qquad\forall v_h\in\VFN \
\label{eqn:condNodal}
\end{equation}
and 
\begin{equation}
\int_f v_h\,\,p_1\df = \int_f \Pi^\Delta_f v_h\,\,p_1\df
\qquad\forall p_1\in\mathbb{P}_1(f)\,,\qquad\forall v_h\in\VF
\label{eqn:piDeltaFaceEnhancing}
\end{equation}
will be essential to define our virtual scheme on polyhedrons.
\end{remark}

Finally, we make use of the $L^2$-projection operator on faces $\bPi_f^0:[L^2(f)]^2\to[\mathbb{P}_0(f)]^2$ 
to approximate the gradient of a generic function $v_h\in \VF$. 
Such projection operator is defined by these relations 
\begin{equation}
\int_f \bPi_f^0\,(\nabla_\tau v_h) \cdot \cc \df = \mathlarger{\int_f} \nabla_\tau v_h \cdot \cc \df\,, 
\quad\quad\forall \cc\in[\mathbb{P}_0(f)]^2\,.\\
\label{eqn:defPiFGrad}
\end{equation}

This projection operator is computable from the degrees of freedom of $\VF$.
Indeed, let us consider the right hand side of Equation~\eqref{eqn:defPiFGrad}
$$
\mathlarger{\int_f} \nabla_\tau v_h \cdot \cc\df = 
\mathlarger{\int_{\partial f}} v_h\,(\N_f\cdot \cc) \df = 
\sum_{e\in \partial f} (\N_e\cdot \cc)\,\mathlarger{\int_e}\,v_h\de\,,
$$
the last integral is \emph{exactly} computable 
since the virtual function $v_h$ is a polynomial of degree 3 on the edges and
such edge polynomials are uniquely determined by the degrees of freedom of $\VF$.

%%%%%%%%%%%%%%%%%%%%%%%%%%%%%%%%%%%%%%%%%%%%%%%%%%%%%%%%%%%%%
% \input{c1SpacesOnPoly}
%%%%%%%%%%%%%%%%%%%%%%%%%%%%%%%%%%%%%%%%%%%%%%%%%%%%%%%%%%%%%

\subsection{Virtual element space in $P$}\label{sub:VSOnPoly}

Given a polyhedron $P\in\Omega_h$ we consider the preliminary space 
\begin{eqnarray}
\widetilde{V}_h(P) := \bigg\{v_h\in H^2(P)\:&:&\: \Delta^2\,v_h\in\mathbb{P}_2(P), \nonumber \\[-0.3em]
\:&\phantom{:}&\: 
v_h|_{S_P}\in C^0(S_P)\,,
\nabla v_h|_{S_P}\in[C^0(S_P)]^3\,,\nonumber \\
\:&\phantom{:}&\: 
v_h|_f\in \VF\,,\,
\partial_{n_f} v_h|_f\in \VFN
\quad\forall f\in\partial P
\bigg\}\,,\nonumber \\
\label{eqn:defVTildeP}
\end{eqnarray}
where $S_P$ denotes the skeleton (the union of all edges) of the polyhedron $P$.

This space is composed by functions whose bi-Laplacian is a polynomial of degree 2.
The restriction of such functions on each face is a two dimensional $C^1$ virtual function,
see \coor{Section}~\ref{sub:VSOnFace2},
while their normal derivative on each face is a $C^0$ virtual function,
see \coor{Section}~\ref{sub:VSOnFace1}.

To build a suitable virtual element space in a polyhedron $P$,
we define two sets of linear operators from $\widetilde{V}_h(P)$ to $\mathbb{R}$:
\begin{center}
\begin{minipage}{0.9\textwidth}
\begin{itemize}
 \item[\DZ:] the values of the function at the vertices, $v_h(\nu)$; 
 \item[\DU:] the values of the gradient components at the vertices, $\nabla v_h(\nu)$.
\end{itemize} 
\end{minipage}
\end{center}

We define the projection operator $\Pi^\Delta_P:\widetilde{V}_h(P)\to\mathbb{P}_2(P)$ by the following relations
\begin{equation}
\Bigg\{
\begin{array}{rll}
a_P^\Delta\left(\Pi^\Delta_P v_h\,, p_2\right) &=a_P^\Delta\left(v_h\,, p_2\right) &\forall p_2\in\mathbb{P}_2(P)\\[0.2em]
\left(\Pi^\Delta_P v_h\,, p_1\right)_{\partial P} &= \left(v_h\,, p_1\right)_{\partial P} &\forall p_1\in\mathbb{P}_1(P)\\
\end{array}\,,
\label{eqn:piDelta}
\end{equation}
where we introduced
\begin{eqnarray}
a_P^\Delta\left(v_h\,,w_h\right) &:=& \int_P \nabla^2\,v_h\,:\nabla^2\,w_h\,\dPP\,,\label{eqn:aDelta}
\end{eqnarray}
and $(\cdot,\cdot)_{\partial P}$ is the standar $L^2$ inner product over the boundary of $P$.

As usual in VEM, the second condition in Equation~\eqref{eqn:piDelta} is needed 
to select an element from the non-trivial kernel of the operator $a^\Delta(\cdot,\,\cdot)_P$.

\begin{lemma}
The operator $\Pi^\Delta_P:\widetilde{V}_h(P)\to\mathbb{P}_2(P)$ is computable and uniquely determined by 
the values of the linear operators $\DZ$ and $\DU$.
\label{lem:piDeltaPoly}
\end{lemma}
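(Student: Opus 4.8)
The plan is to establish two facts: that the linear system~\eqref{eqn:piDelta} defining $\Pi^\Delta_P v_h$ admits a unique solution in $\mathbb{P}_2(P)$, and that each term on its right-hand side is expressible through the values $\DZ$ and $\DU$ alone. I will treat well-posedness first and computability second.

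For well-posedness, the key observation is that $a_P^\Delta(\cdot,\cdot)$ is a symmetric positive semidefinite form on $\mathbb{P}_2(P)$ whose kernel equals $\mathbb{P}_1(P)$, since the Hessian of a polynomial of degree $\le 2$ is constant and vanishes precisely on affine polynomials. Hence the first group of relations in~\eqref{eqn:piDelta} fixes the Hessian of $\Pi^\Delta_P v_h$ — indeed it forces $\nabla^2(\Pi^\Delta_P v_h)=|P|^{-1}\int_P\nabla^2 v_h\,\dPP$ — and therefore determines $\Pi^\Delta_P v_h$ up to an element of $\mathbb{P}_1(P)$, i.e.\ up to $\dim\mathbb{P}_1(P)=4$ parameters. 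The second group consists of exactly $4$ linear conditions whose matrix, being the Gram matrix of $(\cdot,\cdot)_{\partial P}$ restricted to $\mathbb{P}_1(P)$ (positive definite, since an affine function vanishing a.e.\ on $\partial P$ is identically zero), is invertible; this pins down the remaining affine part. So $\Pi^\Delta_P$ is well defined.

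For computability, I would fix $p_2\in\mathbb{P}_2(P)$, use that $\nabla^2 p_2$ is a constant symmetric matrix, and integrate by parts to obtain
\[
a_P^\Delta(v_h,p_2)=\nabla^2 p_2:\int_P\nabla^2 v_h\,\dPP
=\nabla^2 p_2:\sum_{f\in\partial P}\left(\int_f\nabla v_h\,\df\right)\otimes\N_f .
\]
On each face $f$ one then splits $\nabla v_h|_f=\nabla_\tau(v_h|_f)+(\partial_{n_f}v_h)\,\N_f$: the tangential part contributes $\int_f\nabla_\tau(v_h|_f)\,\df$, computable from the degrees of freedom of $\VF$ exactly as in~\eqref{eqn:defPiFGrad}, and the normal part contributes $\N_f\int_f(\partial_{n_f}v_h)\,\df$, computable from the degrees of freedom of $\VFN$ via the enhancement identity~\eqref{eqn:condNodal}. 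What makes this usable is that the required face degrees of freedom are themselves read off from $\DZ$ and $\DU$: the vertex values of $v_h|_f$ are a subset of $\DZ$; the vertex values of $\nabla_\tau(v_h|_f)$ are the tangential components of the vectors in $\DU$; and the vertex values of $\partial_{n_f}v_h\in\VFN$ are the scalars $\nabla v_h(\nu)\cdot\N_f$, also supplied by $\DU$. For the boundary $L^2$ terms I would write $(v_h,p_1)_{\partial P}=\sum_{f\in\partial P}\int_f v_h\,(p_1|_f)\,\df$ and invoke the enhancing property~\eqref{eqn:piDeltaFaceEnhancing}, which (since $p_1|_f\in\mathbb{P}_1(f)$) replaces $\int_f v_h\,(p_1|_f)\,\df$ by $\int_f\Pi^\Delta_f v_h\,(p_1|_f)\,\df$, computable because $\Pi^\Delta_f v_h$ is determined by the degrees of freedom of $\VF$ and hence by $\DZ,\DU$. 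Solving the square invertible system assembled above then yields $\Pi^\Delta_P v_h$ explicitly in terms of $\DZ$ and $\DU$.

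The main obstacle, and the only genuinely non-routine step, is the computability of $a_P^\Delta(v_h,p_2)$: one has to recognize that integration by parts collapses the volume term into the face integrals $\int_f\nabla v_h\,\df$, and that the tangential/normal decomposition of this vector falls exactly into the two face spaces $\VF$ and $\VFN$ whose enhancement constraints~\eqref{eqn:condNodal} and~\eqref{eqn:piDeltaFaceEnhancing} are tailored precisely to make these integrals computable; everything else (the dimension count, invertibility of the $4\times 4$ blocks, and bookkeeping of which face degree of freedom comes from which of $\DZ,\DU$) is routine.
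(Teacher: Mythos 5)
Your proposal is correct and follows essentially the same route as the paper: integrate by parts to reduce $a_P^\Delta(v_h,p_2)$ to face integrals of $\nabla v_h$, split into tangential and normal components handled by the $\VF$ and $\VFN$ enhancement properties~\eqref{eqn:defPiFGrad} and~\eqref{eqn:condNodal} respectively, and treat the boundary $L^2$ condition via~\eqref{eqn:piDeltaFaceEnhancing}. The only difference is that you also spell out the well-posedness of the defining system (kernel $\mathbb{P}_1(P)$ fixed by the boundary moments), which the paper takes for granted; that addition is sound.
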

\Proof Let us consider the first condition in Equation~\eqref{eqn:piDelta}.
The main issue is how to compute the right hand side
since it involves the virtual function $v_h$.
We integrate by parts and we get 
\begin{eqnarray*}
a_P^\Delta\left(v_h\,, p_2\right) &=& \int_P \nabla^2\,v_h\,:\nabla^2p_2\dPP  = \\ &=&
- \int_P \nabla\,v_h\,\cdot\textbf{div}(\nabla^2p_2)\dPP + \int_{\partial P} \nabla\,v_h\cdot\big[(\nabla^2p_2)\,\N\big]\df = \\ &=&
\sum_{f\in \partial P} \int_f \nabla\,v_h\cdot\big[(\nabla^2p_2)\,\N_f\big]\df\,.
\end{eqnarray*}
Then, we make the following orthonormal vector decomposition
$$
\nabla\,v_h =  
(\nabla\,v_h\cdot\V_{f,\widetilde{x}})\,\V_{f,\widetilde{x}} + 
(\nabla\,v_h\cdot\V_{f,\widetilde{y}})\,\V_{f,\widetilde{y}} +
(\nabla\,v_h\cdot\N_f)\,\N_f\,,
$$
where $\V_{f,\widetilde{x}}$ and $\V_{f,\widetilde{y}}$ are three dimensional unit vectors (tangent to the face)
which identify the local two dimensional coordinate system of $f$, and $\N_f$ is the outward pointing normal of the face, 
see Figure~\ref{fig:faceCoord}.
We plug this decomposition in the previous equation and we get
\begin{eqnarray}
a_P^\Delta\left(v_h\,, p_2\right) &=& \sum_{f\in \partial P}\bigg[ \omega_{\V_{f,\widetilde{x}}}\int_f (\nabla\,v_h\cdot\V_{f,\widetilde{x}})\df +
\omega_{\V_{f,\widetilde{y}}}\int_f (\nabla\,v_h\cdot\V_{f,\widetilde{y}})\df + \nonumber \\
&&\hspace{0.7cm}+\,\omega_{\N_f}\int_f (\nabla\,v_h\cdot\N_f)\df \bigg]\,, 
\label{eqn:intDivision}
\end{eqnarray}
where 
$$
\omega_{\V_{f,\widetilde{x}}} := \V_{f,\widetilde{x}}\cdot\big[(\nabla^2p_2)\,\N_f\big]\,,\quad\quad
\omega_{\V_{f,\widetilde{y}}} := \V_{f,\widetilde{y}}\cdot\big[(\nabla^2p_2)\,\N_f\big]\,,
$$
and
$$
\omega_{\N_f} := \N_f\cdot\big[(\nabla^2p_2)\,\N_f\big]\,,
$$
are constant values so we can move them out from the integral over the face $f$.
Then, the integrals in Equation~\eqref{eqn:intDivision} are computable using 
the face projectors of the previous \coor{Section}s, which in turn are uniquely defined by the values of $\DZ$ and $\DU$.
More specifically, since for \eqref{eqn:defVTildeP} we have $(\nabla\,v_h\cdot\N_f)\in \VFN$, 
we can exploit the standard nodal projection $\Pi_f^\nabla$ and condition~\eqref{eqn:condNodal}.
Furthermore, since both $(\nabla\,v_h\cdot\V_{f,\widetilde{x}})$ and $(\nabla\,v_h\cdot\V_{f,\widetilde{y}})$ 
correspond to tangent derivatives of $v_h|_f$ and $v_h|_f\in\VF$ those integrals can be computed using~\eqref{eqn:defPiFGrad}.
Finally, the last condition of Equation~\eqref{eqn:piDelta} involves only integrals on faces of $P$ and 
thus it is computable from $\DZ$ and $\DU$ recalling property~\eqref{eqn:piDeltaFaceEnhancing}. 

\begin{figure}[!htb]
\begin{center}
\includegraphics[width=0.35\textwidth]{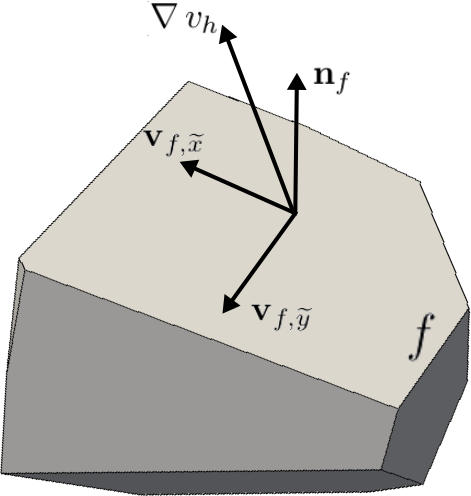}\\
\end{center}
\caption{The unit vectors $\V_{f,\widetilde{x}}$, $\V_{f,\widetilde{y}}$ and $\N_f$ for the face $f$.}
\label{fig:faceCoord}
\end{figure}

\cvd

Now we are ready to define the local virtual element space, $V_h(P)$
\begin{equation}
V_h(P) := \Bigg\{v_h\in \widetilde{V}_h(P)\::\: 
\mathlarger{\int_P} \Pi^\Delta_P v_h\,\,p_2\dPP = \mathlarger{\int_P} v_h\,\,p_2\dPP\,,\:\:\forall p_2\in\mathbb{P}_2(P)
\Bigg\}\,.
\label{eqn:VirtOnPoly}
\end{equation}

It is trivial to check that 
$$
\mathbb{P}_2(P)\subseteq V_h(P)\,.
$$

\begin{lemma}
The set of linear operators $\DZ$ and $\DU$ are a set of degrees of freedom for the space $V_h(P)$.
\end{lemma}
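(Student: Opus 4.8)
The plan is to prove unisolvence by the usual Virtual Element argument: pin down $\dim\widetilde{V}_h(P)$ via a biharmonic lifting, show that $\DZ$, $\DU$ together with the moments against $\mathbb{P}_2(P)$ form a dual basis of $\widetilde{V}_h(P)$, and then use the enhancing condition~\eqref{eqn:VirtOnPoly} and Lemma~\ref{lem:piDeltaPoly} to collapse those moments onto $\DZ,\DU$. Write $N_V$ for the number of vertices of $P$, so that $\DZ$ and $\DU$ together carry $4N_V$ scalars; these point evaluations of $v_h$ and $\nabla v_h$ make sense on $\widetilde{V}_h(P)$ because, by definition, $v_h|_{S_P}\in C^0(S_P)$ and $\nabla v_h|_{S_P}\in[C^0(S_P)]^3$. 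The goal is to show $(\DZ,\DU)$ maps $V_h(P)$ isomorphically onto $\mathbb{R}^{4N_V}$.

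First I would compute $\dim\widetilde{V}_h(P)$. Let $B_h$ be the space of boundary data $\big(\{\psi_f\in\VF\},\{\chi_f\in\VFN\}\big)_{f\in\partial P}$ that glue to a boundary function of class $C^0$ on $S_P$ together with its gradient, with $f$-trace $\psi_f$ and $f$-normal derivative $\chi_f$. The map $\widetilde{V}_h(P)\ni v_h\mapsto\big(\{v_h|_f\},\{\partial_{n_f}v_h|_f\},\Delta^2 v_h\big)\in B_h\times\mathbb{P}_2(P)$ is a linear isomorphism: it is injective since a function with vanishing trace, boundary gradient and bi-Laplacian solves the homogeneous biharmonic Dirichlet problem on the Lipschitz polyhedron $P$ and hence vanishes; it is surjective since, for data in $B_h$ (carrying Dirichlet values $g_0$ and normal derivative $g_1$) and any $p\in\mathbb{P}_2(P)$, the problem $\Delta^2 w=p$ in $P$, $w=g_0$, $\partial_n w=g_1$ on $\partial P$ is well posed in $H^2(P)$ by Lax--Milgram, and the resulting $w$ lies in $\widetilde{V}_h(P)$, its face restrictions being the prescribed $\psi_f\in\VF$ and $\chi_f\in\VFN$, its bi-Laplacian being $p$, and the skeleton regularity being built into $B_h$. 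Moreover $\DZ,\DU$ are already a set of degrees of freedom for $B_h$: by the two-dimensional unisolvence recalled in Sections~\ref{sub:VSOnFace1} and~\ref{sub:VSOnFace2}, each $\psi_f$ is fixed by the vertex values of $\psi_f$ and $\nabla_\tau\psi_f$, and each $\chi_f$ by the vertex values of $\chi_f$; and the compatibility on $S_P$ reduces --- since the relevant edge traces are affine, hence determined by their two endpoint values --- to compatibility at the vertices of $P$, that is, to prescribing $v_h(\nu)$ and $\nabla v_h(\nu)$. Hence $\dim B_h=4N_V$ and $\dim\widetilde{V}_h(P)=4N_V+\dim\mathbb{P}_2(P)$.

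Next I would show that $\DZ$, $\DU$ and the moments $v_h\mapsto\int_P v_h\,p_2\dPP$, $p_2\in\mathbb{P}_2(P)$, form a dual basis of $\widetilde{V}_h(P)$; since their number equals $\dim\widetilde{V}_h(P)$, it suffices to check they separate points. If $v_h\in\widetilde{V}_h(P)$ annihilates all of them, then vanishing of $\DZ,\DU$ forces $v_h|_f=0$ (zero $\VF$ degrees of freedom) and $\partial_{n_f}v_h|_f=0$ (zero $\VFN$ degrees of freedom, their vertex values being $\nabla v_h(\nu)\cdot\N_f$) on every face, hence $v_h=0$ and $\nabla v_h=0$ on $\partial P$; integrating by parts twice, both boundary contributions vanishing,
\[
\int_P\nabla^2 v_h:\nabla^2 v_h\dPP=\int_P(\Delta^2 v_h)\,v_h\dPP=0
\]
because $\Delta^2 v_h\in\mathbb{P}_2(P)$ and the associated moment vanishes, so $\nabla^2 v_h\equiv0$, and together with $v_h|_{\partial P}=0$ this gives $v_h=0$.

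The conclusion is then immediate. In the coordinates on $\widetilde{V}_h(P)$ provided by the previous step, the condition~\eqref{eqn:VirtOnPoly} defining $V_h(P)$ says that the moment coordinates equal $\int_P\Pi^\Delta_P v_h\,p_2\dPP$, which by Lemma~\ref{lem:piDeltaPoly} is a fixed linear function of the $\DZ,\DU$ coordinates alone; hence $V_h(P)$ is the graph of a linear map from $(\DZ,\DU)$-space to moment-space, so $(\DZ,\DU)$ restricts to a linear isomorphism $V_h(P)\to\mathbb{R}^{4N_V}$, and in particular $\dim V_h(P)=4N_V$, which is the assertion. The hard part will be the first step: checking that the glued face data genuinely lie in the $H^2(P)$ trace space so that the biharmonic lifting is legitimate, and carrying out the skeleton compatibility bookkeeping; the remainder is routine integration by parts against the face unisolvence already established.
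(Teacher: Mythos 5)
Your proposal is correct and follows essentially the same route as the paper: both rest on the well-posedness of the biharmonic Dirichlet problem to count $\dim\widetilde{V}_h(P)=4N_V+\dim\mathbb{P}_2(P)$, on the 2D face unisolvence for the boundary data, and on the same integration-by-parts kernel argument using $\Delta^2 v_h\in\mathbb{P}_2(P)$ as a test moment. Your packaging (unisolvence of $\DZ$, $\DU$ plus $\mathbb{P}_2$-moments on $\widetilde{V}_h(P)$, then viewing $V_h(P)$ as the graph of the linear map induced by $\Pi^\Delta_P$) is only a cosmetic reorganization of the paper's dimension-inequality-plus-injectivity argument.
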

\Proof A function $w_h\in\widetilde{V}_h(P)$ is the solution of a well-posed 
bi-Laplacian problem defined in $P$,
whose forcing term is a polynomial of degree 2, 
\begin{equation}
\Delta^2\,w_h\in\mathbb{P}_2(P)\,, 
\end{equation}
and its Dirichlet boundary data are
\begin{equation}
w_h|_f\in \VF\,,\quad 
\partial_{n_f} w_h|_f\in \VFN
\quad \forall f\in\partial P\,.
\end{equation}
First of all, recalling the definition of the face spaces and their associated degrees of freedom,
it is easy to check that $\DZ$ and $\DU$ constitute a set of degrees of freedom 
for the boundary space $\widetilde{V}_h(P)|_{\partial P}$.

The dimension of $\widetilde{V}_h(P)$ is equal to the dimension of the data space, i.e. 
the dimension of the loading term plus the dimension of the boundary data space.
In this particular case we have that 
\begin{itemize}
 \item the dimension of the loading term is 10 since we are dealing with polynomials of degree 2 in the three dimensional space and
 \item the dimension of the boundary data space is given by the sum of the cardinality of $\DZ$ and $\DU$, $\#\{\DZ\}+\#\{\DU\}$.
\end{itemize}
We know that $V_h(P)$ is a subspace of $\widetilde{V}_h(P)$ obtained by imposing 
\begin{equation}
\int_P \Pi^\Delta_P v_h\,\,p_2\dPP = \int_P v_h\,\,p_2\dPP\quad\quad\forall p_2\in\mathbb{P}_2(P)\,, 
\label{eqn:piDeltaConst}
\end{equation}
which can be re-written as a set of 10 linear equations.
We deduce that 
$$
\dim(V_h(P))\geq \dim(\widetilde{V}_h(P)) - 10 = \#\{\DZ\}+\#\{\DU\}\,.
$$
Therefore, once we prove that a generic function $v_h\in V_h(P)$ with vanishing $\DZ$ and $\DU$ values 
is the zero element of $V_h(P)$,
we deduce that
$$
\dim(V_h(P)) = \#\{\DZ\}+\#\{\DU\}\,,
$$
and this will complete the proof.

To achieve this goal, 
suppose that $v_h\in V_h(P)$ vanishes on $\DZ$ and $\DU$.
Then
$$
v_h|_f=0,\quad\text{ and }\quad\partial_{n_f} v_h|_f=\textbf{0}\quad\quad\forall f\in\partial P\,.
$$
Moreover, since $\DZ$ and $\DU$ are zero, we have that $\Pi^\Delta v_h=0$, 
Lemma~\ref{lem:piDeltaPoly}, and therefore, recalling~\eqref{eqn:VirtOnPoly}, 
\begin{equation}
\int_P v_h\,\,p_2\dPP = \int_P \Pi^\Delta_P v_h\,\,p_2\dPP = 0\,\quad\quad\forall p_2\in\mathbb{P}_2(P)\,. 
\label{eqn:p2Prop}
\end{equation}
Since $v_h\in V_h(P)$, by definition $\Delta^2 v_h\in\mathbb{P}_2(P)$.
Consequently, we can take $\Delta^2 v_h$ as a test function $p_2$ in Equation~\eqref{eqn:p2Prop}.
Then, if we integrate by parts two times and we exploit the fact that on the boundary $v_h$ and $\partial_n v_h$ are null,
we get
$$
0 = \int_P v_h\,\Delta^2 v_h \dPP = \int_P \Delta v_h\,\Delta v_h \dPP\quad\Rightarrow\quad\Delta v_h = 0\,.
$$
Thus, since $v_h$ is zero on the boundary and its Laplacian is null, $v_h$ is the null function. 
\cvd

To set up the discrete form of Problem~\eqref{eqn:probGenWeak}, 
the projector operator $\Pi^\Delta_P$ alone is not sufficient,
we also need an $L^2-$projection operator $\Pi^0_P$ and an $H^1-$projection operator $\Pi^\nabla_P$.

Let us start with $\Pi^0_P:V_h(P)\to\mathbb{P}_2(P)$.
This projection operator is determined by the following conditions:
\begin{equation}
a^0_P(\Pi^0_P v_h,\,p_2) = a^0_P(v_h,\,p_2)\quad\forall p_2\in\mathbb{P}_2(P)\,,
\label{eqn:piZero}
\end{equation}
where we defined the bilinear form 
\begin{equation}
a^0_P(v_h,\,w_h) := \int_P v_h\,w_h\dPP\,.
\label{eqn:a0}
\end{equation}

The subsequent lemma easily follows recalling~\eqref{eqn:VirtOnPoly} and the computability of~$\Pi^\Delta_P$.

\begin{lemma}
The projection operator $\Pi^0_P:V_h(P)\to\mathbb{P}_2(P)$ is computable from $\DZ$ and $\DU$
(and actually coincides with $\Pi^\Delta_P$, see~\eqref{eqn:VirtOnPoly}).
\end{lemma}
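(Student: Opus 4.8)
The plan is to prove the stronger assertion first, namely that $\Pi^0_P v_h = \Pi^\Delta_P v_h$ for every $v_h\in V_h(P)$; computability from $\DZ$ and $\DU$ then follows for free from Lemma~\ref{lem:piDeltaPoly}.

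First I would unfold the definition~\eqref{eqn:piZero} of $\Pi^0_P$: for a fixed $v_h\in V_h(P)$, the polynomial $\Pi^0_P v_h\in\mathbb{P}_2(P)$ is characterized by $\int_P \Pi^0_P v_h\,p_2\dPP = \int_P v_h\,p_2\dPP$ for all $p_2\in\mathbb{P}_2(P)$. The only non-computable object on the right is the virtual function $v_h$ itself, but this is exactly the quantity constrained by the enhancement condition built into $V_h(P)$: by~\eqref{eqn:VirtOnPoly} we have $\int_P v_h\,p_2\dPP = \int_P \Pi^\Delta_P v_h\,p_2\dPP$ for every $p_2\in\mathbb{P}_2(P)$. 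Substituting, $\Pi^0_P v_h$ is characterized by $\int_P \Pi^0_P v_h\,p_2\dPP = \int_P \Pi^\Delta_P v_h\,p_2\dPP$ for all $p_2\in\mathbb{P}_2(P)$.

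Next I would observe that both $\Pi^0_P v_h$ and $\Pi^\Delta_P v_h$ belong to $\mathbb{P}_2(P)$, and that $a^0_P(\cdot,\cdot)$ restricted to $\mathbb{P}_2(P)\times\mathbb{P}_2(P)$ is a symmetric positive definite bilinear form, i.e.\ an inner product. Testing the above identity with the admissible choice $p_2 = \Pi^0_P v_h - \Pi^\Delta_P v_h \in \mathbb{P}_2(P)$ yields $\| \Pi^0_P v_h - \Pi^\Delta_P v_h \|_{L^2(P)}^2 = 0$, hence $\Pi^0_P v_h = \Pi^\Delta_P v_h$. Since $v_h$ was arbitrary, the two operators coincide on $V_h(P)$, and therefore $\Pi^0_P$ inherits computability from $\DZ$ and $\DU$ via Lemma~\ref{lem:piDeltaPoly}.

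I do not anticipate any genuine obstacle here; the proof is essentially a one-line consequence of the enhancement property. The only point worth emphasizing is conceptual: condition~\eqref{eqn:VirtOnPoly} is tailored precisely so that the mass-type right-hand side $\int_P v_h\,p_2\dPP$ becomes computable, which is the same mechanism already exploited on the face spaces through~\eqref{eqn:condNodal} and~\eqref{eqn:piDeltaFaceEnhancing}.
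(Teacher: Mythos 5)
Your proposal is correct and follows exactly the route the paper intends: the enhancement condition in~\eqref{eqn:VirtOnPoly} makes the right-hand side of~\eqref{eqn:piZero} equal to $\int_P \Pi^\Delta_P v_h\,p_2\dPP$, and uniqueness of the $L^2$ projection onto $\mathbb{P}_2(P)$ forces $\Pi^0_P v_h = \Pi^\Delta_P v_h$, so computability is inherited from Lemma~\ref{lem:piDeltaPoly}. The paper leaves this as an immediate consequence; your write-up simply makes the same one-line argument explicit.
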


Now we consider the projection operator $\Pi^\nabla_P:V_h(P)\to\mathbb{P}_2(P)$ defined by 
\begin{equation}
\left\{
\begin{array}{rll}
a_P^\nabla\left(\Pi^\nabla_P v_h\,, p_2\right) &=a_P^\nabla\left(v_h\,, p_2\right) &\forall p_2\in\mathbb{P}_2(P)\\[0.2em]
% \mathlarger{\int_P} \Pi^\nabla_P v_h\dPP &= \mathlarger{\int_P} v_h\dPP\\
a_P^0\left(\Pi^\nabla_P v_h\,, 1\right) &= a_P^0\left(v_h\,, 1\right) \\
\end{array}\right.\,,
\label{eqn:piNabla}
\end{equation}
where 
\begin{equation}
a_P^\nabla\left(v_h\,,w_h\right) := \int_P \nabla\,v_h\,\cdot\nabla\,w_h\,\dPP\,. 
\label{eqn:aNabla}
\end{equation}

\begin{lemma}
The projection operator $\Pi^\nabla_P:V_h(P)\to\mathbb{P}_2(P)$ is computable from $\DZ$ and $\DU$.
\end{lemma}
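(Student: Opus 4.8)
The plan is to verify that both right-hand sides in the defining system~\eqref{eqn:piNabla} are computable from $\DZ$ and $\DU$; the left-hand side is a well-posed square system on $\mathbb{P}_2(P)$ (the first equation determines $\Pi^\nabla_P v_h$ only up to an additive constant, since the restriction of $a_P^\nabla(\cdot,\cdot)$ to $\mathbb{P}_2(P)$ has the constants as kernel, while the second equation fixes that constant), so computability of the data yields computability of $\Pi^\nabla_P v_h$.

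The second equation is the easy one: $a_P^0(v_h,1)=\int_P v_h\dPP$, and since $1\in\mathbb{P}_2(P)$ the enhancing property~\eqref{eqn:VirtOnPoly} gives $\int_P v_h\dPP=\int_P \Pi^\Delta_P v_h\dPP$, which is computable by Lemma~\ref{lem:piDeltaPoly}. For the first equation I would integrate by parts, exactly as in the proof of Lemma~\ref{lem:piDeltaPoly}:
$$
a_P^\nabla(v_h,p_2)=\int_P\nabla v_h\cdot\nabla p_2\dPP
=-\int_P v_h\,\Delta p_2\dPP+\sum_{f\in\partial P}\int_f v_h\,(\nabla p_2\cdot\N_f)\df\,.
$$
Since $p_2\in\mathbb{P}_2(P)$, $\Delta p_2$ is a constant, so the volume term equals $(\Delta p_2)\int_P v_h\dPP$, computable as above. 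On each face, $\nabla p_2\cdot\N_f$ restricts to an element of $\mathbb{P}_1(f)$, and since $v_h|_f\in\VF$ the enhancing property~\eqref{eqn:piDeltaFaceEnhancing} gives $\int_f v_h\,(\nabla p_2\cdot\N_f)\df=\int_f \Pi^\Delta_f v_h\,(\nabla p_2\cdot\N_f)\df$; the face projection $\Pi^\Delta_f v_h$ is computable from the function and gradient values at the vertices of $f$, hence from $\DZ$ and $\DU$. Collecting these, the right-hand side of the first equation is computable, which completes the argument.

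The only delicate point — and the reason the enhanced spaces were introduced — is the boundary term: a naive computation would be left with $\int_f v_h\,(\nabla p_2\cdot\N_f)\df$, i.e.\ the genuinely non-polynomial virtual trace $v_h|_f$ tested against a non-constant polynomial on the face, which is not directly accessible from the degrees of freedom; it is precisely property~\eqref{eqn:piDeltaFaceEnhancing} that turns this into an integral of the \emph{computable} polynomial $\Pi^\Delta_f v_h$. Everything else is routine integration by parts together with the already-established computability of $\Pi^\Delta_P$ and $\Pi^\Delta_f$.
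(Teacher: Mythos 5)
Your proposal is correct and follows essentially the same route as the paper: integrate by parts, use the volume enhancing property~\eqref{eqn:VirtOnPoly} (equivalently, the computable $\Pi^0_P$) for the term $\int_P v_h\,\Delta p_2\dPP$, and use the face enhancing property~\eqref{eqn:piDeltaFaceEnhancing} with $\nabla p_2\cdot\N_f\in\mathbb{P}_1(f)$ for the boundary terms. Your added remarks on the well-posedness of the square system and the computability of the second equation of~\eqref{eqn:piNabla} are correct and slightly more complete than the paper's proof, which only addresses the first equation.
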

\Proof We have to check that the right hand side of the first equation in~\eqref{eqn:piNabla} is computable using 
only the degrees of freedom values $\DZ$ and $\DU$.
Let us consider the first condition in~\eqref{eqn:piNabla}.
If we integrate by parts and recall definitions~\eqref{eqn:c1Enhanced} and~\eqref{eqn:VirtOnPoly},
we notice that this term depends \emph{only} on the projection operators $\Pi^0_P$ and $\Pi^\Delta_f$, 
that in turn depend only on $\DZ$ and $\DU$ values. Indeed 
\begin{eqnarray*}
a_P^\nabla\left(v_h\,, p_2\right) &=& \mathlarger{\int_P} \nabla v_h\,\cdot\nabla p_2\dPP \\ &=&
- \mathlarger{\int_P} v_h\,\Delta p_2\dPP + \int_{\partial P} v_h\,(\nabla p_2\cdot\N)\df \\  &=&
- \Delta p_2\,\mathlarger{\int_P} v_h \dPP + \sum_{f\in \partial P} \int_f v_h\,(\nabla p_2\cdot\N_f)\df \\ &=&
- \Delta p_2\,\mathlarger{\int_P} \Pi^0_P v_h \dPP + \sum_{f\in \partial P} \int_f \Pi^\Delta_f v_h\,(\nabla p_2\cdot\N_f)\df\,.
\end{eqnarray*}
\cvd

\subsection{Global virtual space $V_h(\Omega_h)$}\label{sub:genSpace}

The global discrete space 
which will be used to discretize Problem~\eqref{eqn:probGenWeak} is
\begin{equation}
V_h(\Omega_h) := \left\{v_h\in V(\Omega)\::\: v_h|_P\in V_h(P)\right\}\,.
\label{Vh-def} 
\end{equation}

Let us consider the canonical basis functions $\{\phi_i\}_i$ associated with the degrees of freedom $\DZ$ and $\DU$,
i.e. the functions $\phi_i$ which take value 1 on the $i-$th degree of freedom and vanish for the remaining ones.
It is easy to check that, 
assuming for simplicity a uniform mesh family, 
the basis functions associated with the set $\DZ$ satisfy $||\phi_i||_{L^\infty(\Om)}\sim 1$, while
the basis functions associated with $\DU$ behave like $||\phi_i||_{L^\infty(\Om)}\sim h_P$, where $h_P$
is the diameter of the polyhedron $P$.
Since this different scaling behavior with respect to the mesh size may yield detrimental effects  on the condition number 
of the discrete system it is wiser to scale accordingly the second set of degrees of freedom.

Consequently, the global degrees of freedom for $V_h(\Om)$ which we adopt in practice are
\begin{center}
\begin{minipage}{0.9\textwidth}
\begin{itemize}
 \item[$\CZ$:] evaluations of $v_h(\nu)$ at each vertex of the mesh $\Omega_h$;
 \item[$\CU$:] evaluations of $h_\nu\nabla v_h(\nu)$ at each vertex of the mesh $\Omega_h$,
\end{itemize}
\end{minipage} 
\end{center}
where $h_\nu$ denotes some local mesh size parameter, for instance the average diameter of the neighboring elements.
This choice will be better discussed in \coor{Section}~\ref{sub:sensitivity}.
The dimension of $V_h(\Om_h)$ is four times the number of mesh vertices.

%%%%%%%%%%%%%%%%%%%%%%%%%%%%%%%%%%%%%%%%%%%%%%%%%%%%%%%%%%%%%
% \input{discreteForms}
%%%%%%%%%%%%%%%%%%%%%%%%%%%%%%%%%%%%%%%%%%%%%%%%%%%%%%%%%%%%%

\section{Discrete virtual forms and the discrete problem}\label{sec:virtForms}

When we are solving a PDE via the virtual element method, 
we have to define a suitable set of discrete forms for the problem at hand.
Such forms are constructed element-by-element and 
depend only on the local degrees of freedom $\DZ$ and $\DU$, 
also via the projection operators $\Pi^\Delta_P$, $\Pi^0_P$ and $\Pi^\nabla_P$.

Let $P\in\Omega_h$ and $v_h,w_h\in V_h(\Omega_h)$,
we define the following strictly positive bilinear form $s_P:V_h(P)\times V_h(P)\to\mathbb{R}$, 
\begin{equation}
s_P(v_h,\,w_h) := \sum_{\nu\in P} \bigg( v_h(\nu)\,w_h(\nu) + \big(h_\nu\,\nabla v_h(\nu)\big)\cdot \big(h_\nu\nabla w_h(\nu)\big)\bigg)\,,
\label{eqn:stab}
\end{equation}
where $\nu$ is a generic vertex of the polyhedron $P$ and $h_\nu$ is the scaling parameter, 
see the definition of the degrees of freedom $\CZ$ and $\CU$.
Recalling the continuous global form in Equation~\eqref{eqn:biliForms}
and the local bilinear operators defined in Equations~\eqref{eqn:aDelta},~\eqref{eqn:a0} and~\eqref{eqn:aNabla},
we construct the following local discrete linear forms
\begin{equation}
\begin{array}{rlrll}
a_{h,P}^\Delta(v_h,\,w_h) &:= a_P^\Delta(\Pi^\Delta_P v_h,\,\Pi^\Delta_P w_h) &\hspace{-0.5em}+&\hspace{-0.5em} h_P^{-1} &\hspace{-1em}s_P(v_h - \Pi^\Delta_P v_h,\,w_h - \Pi^\Delta_P w_h)\,,\\[0.2em]
a_{h,P}^\nabla(v_h,\,w_h) &:= a_P^\nabla(\Pi^\nabla_P v_h,\,\Pi^\nabla_P w_h) &\hspace{-0.5em}+&\hspace{-0.5em} h_P\,    &\hspace{-1em}s_P(v_h - \Pi^\nabla_P v_h,\,w_h - \Pi^\nabla_P w_h)\,,\\[0.2em]
a_{h,P}^0(v_h,\,w_h)      &:= a_P^0(\Pi^0_P v_h,\,\Pi^0_P w_h)                &\hspace{-0.5em}+&\hspace{-0.5em} h_P^3\,  &\hspace{-1em}s_P(v_h - \Pi^0_P v_h,\,w_h - \Pi^0_P w_h)\,,          \\[0.4em]
\end{array} 
\label{eqn:discLocalForm}
\vspace{0.5em}
\end{equation}
for all $v_h,\,w_h\in V_h(P)$, where $h_P$ is the diameter of the polyhedron $P$.
The construction above is standard in VEM, see for instance~\cite{volley,autostoppisti}.
The first term of each bilinear form in Equation~\eqref{eqn:discLocalForm} is the so-called consistency part,
while the second term is the stability part. 
This stability part is scaled in such a way that, 
under the assumptions (A1)-(A3),
there exist two positive constants $c^\star,c_\star$ such that
\begin{equation}
c_\star  a_{P}^\sharp(v_h,\,v_h) \leq a_{h,P}^\sharp(v_h,\,v_h) \leq c^\star  a_{P}^\sharp(v_h,\,v_h)
\quad\forall v_h\in V_h(P)\,,
\end{equation}
where, as before, the symbol $\sharp$ stands for $\Delta,\nabla$ and 0, respectively.

\begin{lemma}[consistency]
For all the bilinear forms in Equation~\eqref{eqn:discLocalForm} it holds
\begin{equation}
a_{h,P}^\sharp(v_h,\,p_2) =  a_{P}^\sharp(v_h,\,p_2)\quad\quad\forall p_2\in\mathbb{P}_2(P)\,, 
\forall v_h\in V_h(P)\,,
\label{eqn:consistencyProp}
\end{equation}
where the symbol $\sharp$ stands for $\Delta,\nabla$ and 0, respectively.
\label{lem:consistency}
\end{lemma}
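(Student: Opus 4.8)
The plan is to verify the identity separately for each of the three bilinear forms defined in~\eqref{eqn:discLocalForm}, exploiting the by-now standard ``consistency + stability'' splitting. Fix $P\in\Omega_h$, $v_h\in V_h(P)$ and $p_2\in\mathbb{P}_2(P)$. Each discrete form $a_{h,P}^\sharp$ is the sum of a consistency term built from the relevant projector and a stability term $s_P(v_h-\Pi^\sharp_P v_h,\,\cdot-\Pi^\sharp_P\,\cdot)$. The first step is to kill the stability term: since $\mathbb{P}_2(P)\subseteq V_h(P)$ and each projector $\Pi^\sharp_P$ reproduces polynomials of degree $2$ (this follows from the defining equations~\eqref{eqn:piDelta},~\eqref{eqn:piZero},~\eqref{eqn:piNabla}, whose solution for a polynomial input is the polynomial itself by uniqueness), we have $p_2-\Pi^\sharp_P p_2=0$, hence $s_P(v_h-\Pi^\sharp_P v_h,\,p_2-\Pi^\sharp_P p_2)=0$ regardless of the other argument. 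So it remains to prove $a_P^\sharp(\Pi^\sharp_P v_h,\,\Pi^\sharp_P p_2)=a_P^\sharp(v_h,\,p_2)$, i.e. $a_P^\sharp(\Pi^\sharp_P v_h,\,p_2)=a_P^\sharp(v_h,\,p_2)$ after using $\Pi^\sharp_P p_2=p_2$.

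The remaining identity is exactly the first defining relation of each projector, applied with the test polynomial $p_2$. For $\sharp=\Delta$: the first line of~\eqref{eqn:piDelta} reads $a_P^\Delta(\Pi^\Delta_P v_h,\,p_2)=a_P^\Delta(v_h,\,p_2)$ for all $p_2\in\mathbb{P}_2(P)$, which is precisely what is needed. For $\sharp=0$: equation~\eqref{eqn:piZero} states $a^0_P(\Pi^0_P v_h,\,p_2)=a^0_P(v_h,\,p_2)$ for all $p_2\in\mathbb{P}_2(P)$, done. For $\sharp=\nabla$: the first line of~\eqref{eqn:piNabla} gives $a_P^\nabla(\Pi^\nabla_P v_h,\,p_2)=a_P^\nabla(v_h,\,p_2)$ for all $p_2\in\mathbb{P}_2(P)$, again exactly the claim. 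Thus in all three cases the consistency term reproduces $a_P^\sharp(v_h,\,p_2)$ and the stability term vanishes, giving~\eqref{eqn:consistencyProp}.

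There is no real obstacle here: the lemma is essentially a tautology once one observes that the stabilization is designed to vanish on the polynomial space and that the consistency part is, by construction of the projectors, the ``correct'' form whenever one argument is a degree-$2$ polynomial. The only point requiring a (trivial) argument is the polynomial-reproduction property $\Pi^\sharp_P p_2=p_2$, which follows from uniqueness of the solution of the respective projector-defining linear systems; this is also implicit in the already-noted inclusion $\mathbb{P}_2(P)\subseteq V_h(P)$. I would state the proof in one short paragraph, treating the three cases in parallel with the generic symbol $\sharp$, rather than writing three near-identical computations.
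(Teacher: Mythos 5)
Your proposal is correct and follows essentially the same route as the paper's (one-line) proof: the stabilization term vanishes because the projectors reproduce degree-$2$ polynomials, and the consistency term reduces to $a_P^\sharp(v_h,p_2)$ by the orthogonality built into the defining relations~\eqref{eqn:piDelta},~\eqref{eqn:piZero},~\eqref{eqn:piNabla}. You merely spell out the two steps that the paper compresses into the single observation that the projectors are orthogonal with respect to their associated bilinear forms.
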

\Proof The property in Equation~\eqref{eqn:consistencyProp} follows from the fact that 
the projection operators, $\Pi^\Delta_P$, $\Pi^\nabla_P$, and $\Pi^0_P$, are orthogonal with respect to the 
bilinear form they are associated with.
\cvd

Lemma~\ref{lem:consistency} states  
that the discrete bilinear forms $a_{h,P}^\sharp(\cdot,\cdot)$ are \emph{exact}
when one of the entries is a polynomials of degree 2.
Finally, as in a standard virtual element framework,
the global discrete forms are obtained by summing each local bilinear form over all mesh elements.
Then the discrete problem reads: find $u_h\in V_h(\Om_h)$ such that 
\begin{eqnarray}
&&c_1\,a_h^\Delta(u_h,\,v_h) + c_2\,a_h^\nabla(u_h,\,v_h) + c_3\,a^0_h(u_h,\,v_h)
= (f_h,\,v_h)_{\Om_h}\quad\forall v_h\in V_0(\Omega_h)\,.\nonumber\\
\label{eqn:probGenWeakDisc}
\end{eqnarray}
where
$$
(f_h,\,v_h)_{\Om_h} := \sum_{P\in \Om_h} \int_P \Pi^0_P v_h\,f_h\dPP\,.
$$

\coor{
\begin{remark}
The scheme of the present paper can be immediately extended to the case where the Laplace operator is used instead of the Hessian operator in the definition of the fourth order bilinear form. 
The only modification is to substitute the form $a_P^\Delta(\cdot,\cdot)$ with
$$
a_P^\Delta\left(v\,, w\right) = \int_P \Delta v \, \Delta w \dPP 
$$
and keep the same construction as in \eqref{eqn:discLocalForm} for its discrete counterpart.
\end{remark}
}

%%%%%%%%%%%%%%%%%%%%%%%%%%%%%%%%%%%%%%%%%%%%%%%%%%%%%%%%%%%%%
% \input{c1Theory}
%%%%%%%%%%%%%%%%%%%%%%%%%%%%%%%%%%%%%%%%%%%%%%%%%%%%%%%%%%%%%

\newtheorem{thm}{Theorem}[section]
\newtheorem{cor}[thm]{Corollary}
\newtheorem{lem}[thm]{Lemma}
\newtheorem{defn}{Definition}[section]
\newtheorem{exmp}{Example}[section]

\def\P{\mathbb{P}}
\def\et{{\widetilde e}}
\def\eh{{\widehat e}}
\def\VhF{\VF}
\def\VhFn{\VFN}
\newcommand{\Wh}{W_h}

% -----------------------------------------------------------------
\section{Interpolation and convergence estimates}\label{sec:interpo}
% -----------------------------------------------------------------

In the present section we derive convergence estimates for the proposed method, under the geometric mesh assumptions (A1)-(A3) of the previous sections. In the sequel, the symbol $\lesssim$ will denote bounds up to a contant independent of $h$. 

\begin{thm}\label{thm:conv}
Let the mesh assumptions (A1)-(A3) hold.
Let $u\in H^3(\Om)$ be the solution of Problem~\eqref{eqn:probGenWeak} and 
$u_h$ the solution of the corresponding discrete formulation~\eqref{eqn:probGenWeakDisc}.
Then, it holds
$$
\| u - u_h \|_{2,\Om} \leq c\,h |u|_{3,\Om}
$$
\label{teo:estConv}
\end{thm}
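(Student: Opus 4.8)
The standard Virtual Element strategy applies: combine a Céa-type abstract error estimate with polynomial approximation estimates and with an interpolation estimate for the discrete space. I would proceed in three stages. First, establish the abstract convergence lemma: using the coercivity and continuity of the discrete bilinear form $c_1 a_h^\Delta + c_2 a_h^\nabla + c_3 a_h^0$ on $V_0(\Omega_h)$ (which follow from the stability bounds $c_\star a_P^\sharp(v_h,v_h)\le a_{h,P}^\sharp(v_h,v_h)\le c^\star a_P^\sharp(v_h,v_h)$ stated before the theorem) together with the polynomial consistency (Lemma~\ref{lem:consistency}), one obtains for any $v_h\in V_h(\Omega_h)$ and any piecewise polynomial $v_\pi$ of degree $2$,
$$
\| u - u_h \|_{2,\Om} \lesssim \| u - v_h \|_{2,\Om} + \| u - v_\pi \|_{2,h} + \mathcal{F}_h,
$$
where $\|\cdot\|_{2,h}$ is the broken $H^2$ norm and $\mathcal{F}_h$ is the consistency error coming from the right-hand side term $(f_h,v_h)_{\Om_h}$ built with $\Pi^0_P$. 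This is by now a routine argument (see \cite{volley,BM13}), so I would only sketch it.

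Second, bound the three terms. The term $\| u - v_\pi \|_{2,h}$ is controlled by the Bramble–Hilbert / local polynomial approximation estimate on star-shaped elements: under (A1)–(A3), $\| u - v_\pi \|_{2,P} \lesssim h_P |u|_{3,P}$, which summed over $P$ gives $h|u|_{3,\Om}$. The forcing consistency term $\mathcal{F}_h$, involving $(f - \Pi^0_P f, v_h)$ or the difference between $f$ and its approximation $f_h$, is handled by the $\Pi^0_P$-orthogonality and standard $L^2$ approximation estimates, yielding a term of order $h^2 \|f\|_{0,\Om}$ (or better), hence negligible.

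Third — and this is the crux — construct a suitable interpolant $v_h = u_I \in V_h(\Omega_h)$ and prove $\| u - u_I \|_{2,\Om} \lesssim h |u|_{3,\Om}$. I expect this to be the main obstacle, because the degrees of freedom $\DZ,\DU$ are pointwise function and gradient values, so one first needs $u\in H^3(\Omega)\subset C^0(\overline\Omega)$ with continuous gradient (true in 3D by Sobolev embedding, $H^3(\Omega)\hookrightarrow C^1(\overline\Omega)$) to make the interpolation well defined, and then one must work through the three-level structure of the space: edges, faces ($\VF$ and $\VFN$), and the polyhedral interior. The argument goes by a scaling/Bramble–Hilbert technique on the reference-scaled element, exploiting Remark~\ref{L:tets} (each $P$ is a union of shape-regular tetrahedra sharing a central vertex) to get a uniform Poincaré/trace/inverse-estimate toolbox, together with the stability of the projectors $\Pi^\Delta_f$, $\Pi^\nabla_f$, $\Pi^\Delta_P$. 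Concretely: define $u_I$ on each face via the 2D interpolants in $\VF$ and $\VFN$ (these satisfy $\|u - u_I\|_{H^2(f)}\lesssim h_f^{1/2}\text{-type}$ bounds carried over from \cite{BM13,ABSVu,cinesi_plates}), extend into $P$ by solving the defining biharmonic problem, and then estimate $\|u - u_I\|_{2,P}$ by adding and subtracting a degree-2 polynomial $p$, using that $u_I - p \in V_h(P)$ has the same DOFs as $u - p$ together with the norm equivalence between $\|v_h\|_{2,P}$ and a DOF-based seminorm (which holds, again up to scaling, by the discrete stability and the boundary trace control). Summing the squared local estimates over all elements gives the claimed global bound. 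I would present the face estimates by citing the 2D results and focus the detailed work on the 3D extension step and the DOF-norm equivalence.
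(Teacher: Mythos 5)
Your overall skeleton coincides with the paper's: the abstract ``best approximation'' bound $\| u - u_h \|_{2,\Omega} \lesssim \| u - u_I \|_{2,\Omega} + \| u - u_\pi \|_{2,\Omega_h} + h^2\|f\|_{0,\Omega}$ obtained from stability plus consistency as in \cite{volley}, the Bramble--Hilbert treatment of $u_\pi$, and the identification of the interpolation estimate as the crux. The gap is precisely in that crux, on two counts. First, your construction of $u_I$ (``extend into $P$ by solving the defining biharmonic problem'') does not produce an element of $V_h(P)$: the definition \eqref{eqn:VirtOnPoly} also requires the volumetric moment condition $\int_P \Pi^\Delta_P v_h\, q = \int_P v_h\, q$ for all $q\in\mathbb{P}_2(P)$, and there is no reason the biharmonic extension of the face data satisfies it. The paper handles this by first interpolating in the auxiliary space $W_h$ (where $\Delta^2 w_I=0$) and then adding a correction $\varphi\in H^2_0(P)$ constructed via the mixed problem \eqref{L:rhopbl}, whose size is controlled through an inf-sup argument built on the quartic bubble of one of the shape-regular sub-tetrahedra; this correction is what restores the moment condition without spoiling the $O(h_P|u|_{3,P})$ bound. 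Your sketch is silent on this step.

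Second, your proposed mechanism for the local estimate --- ``$u_I-p$ has the same DOFs as $u-p$'' combined with ``norm equivalence between $\|v_h\|_{2,P}$ and a DOF-based seminorm'' --- relies on a stability result that is not available for this new three-dimensional $C^1$ virtual space and would itself be the hard part of the paper: nothing proved here bounds the $H^2(P)$ norm of a virtual function by (scaled) vertex values of the function and its gradient, and such an inverse-type bound is far from routine for functions defined implicitly through a biharmonic problem with virtual boundary data. The paper deliberately avoids it: it splits $u-w_I=\tilde e+\hat e$ into an interior residual (killed by a duality/Poincar\'e argument using $\Delta^2 w_I=0$) and a biharmonic part driven by the boundary mismatch, which is then controlled by the continuous-dependence estimate of Lemma~\ref{L:ghostlemma}, $|\tilde e|_{2,P}\lesssim|\nabla(u-w_I)|_{1/2,\partial P}$, interpolated between $L^2(\partial P)$ and $H^1(\partial P)$ and finally reduced to the known two-dimensional $C^0$ and $C^1$ face interpolation estimates. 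That boundary-data continuity lemma, not a DOF-norm equivalence, is the key ingredient you are missing; without it (or a proof of the equivalence you invoke) the argument does not close.
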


To derive the proof, following the same identical steps as ~\cite{volley}, Theorem~3.1, one gets 
the ``best approximation'' bound
\begin{equation}\label{L:basic}
\| u - u_h \|_{2,\Omega} \lesssim \| u - u_I \|_{2,\Omega} 
+ \| u - u_\pi \|_{2,\Omega_h} + h^2 \| f \|_{0,\Omega} ,
\end{equation}
for any interpolant $u_I \in V_h$ and piecewise $\P_2$-polynomial $u_\pi$, and
where $| \cdot |_{s,\Omega_h}$ denotes a broken (with respect to the mesh) Sobolev norm of order $s$, $s\ge0$. 

The second term is immediately bounded by standard polynomial approximation estimates on star-shaped domains (see for instance~\cite{Brenner:Scott}), yielding
$$
\| u - u_\pi \|_{2,\Omega_h} \lesssim h | u |_{3,\Omega_h} .
$$

Therefore, the main effort in proving Theorem~\ref{teo:estConv} is bounding the first term in the right hand side of~\eqref{L:basic}, that is showing the interpolation estimates for the space $V_h$. In order to do so, we will first prove interpolation estimates for the simpler space
$$
W_h(\Om_h) := \{v_h\in V(\Om)\::\: v_h|_P\in\Wh(P)\quad\forall P\in\Om_h\,\}\,,
$$
where
\begin{eqnarray*}
\Wh(P) :=  \bigg\{v_h\in H^2(P)\:&:&\: \Delta^2\,v_h=0, \nonumber \\[-0.3em]
\:&\phantom{:}&\: 
v_h|_{S_P}\in C^0(S_P)\,,
\nabla v_h|_{S_P}\in[C^0(S_P)]^3\,,\nonumber \\
\:&\phantom{:}&\: 
v_h|_f\in \VF\,,\,
\partial_{n_f} v_h|_f\in \VFN
\quad\forall f\in\partial P
\bigg\}\,,\nonumber \\
\label{eqn:defWh}
\end{eqnarray*}

Following the same arguments in Section~\ref{sub:VSOnPoly}, it is easy to check that the operators $\DZ$ and $\DU$
constitute a set of degrees of freedom also for $\Wh(P)$.

\begin{remark}
By adding and subtracting a piecewise second-order polynomial, then using a triangle inequality and the continuity of $\Pi^\Delta_P$ in the $H^2$ norm, finally recalling standard approximation results for polynomials on star-shaped domains, from Theorem \ref{thm:conv} one can easily derive also
$$
\sum_{P \in \Omega_h} \coor{\| u - \Pi^\Delta_P(u_h) \|_{2,P}^2 \leq c\,h^2 |u|_{3,\Om_h}^2}
$$
that states the convergence of the projected discrete solution.
\end{remark}

% -----------------------------------------------------------------
\subsection{Interpolation estimates for $\Wh$}
% -----------------------------------------------------------------

In deriving the estimates for the space $\Wh$, we will take full advantage of known results for two-dimensional $C^0$ and $C^1$ VEM spaces (cited below). In addition, we will use the following standard results on the continuous dependence of the solution on the boundary biharmonic data in a polyhedron $P$ \coor{(see for instance~\cite{Girault-Raviart:book1986, Barton-Mayboroda})}.

Given a polyhedron $P$, let $r_1,r_2$ be two scalar functions living on $\partial P$ satisfying $r_1 \in C^0(\partial P)$ and $r_1 \in H^{3/2}(f)$, $r_2 \in H^{1/2}(f)$ for each face $f \in \partial P$. Consider the standard biharmonic Dirichlet problem
\begin{equation}
\left\{
\begin{aligned}
 \Delta^2 v = 0     &\qquad & \textrm{in } P \\
 v = r_1            &\qquad & \textrm{on } \partial P\\
 \partial_n v = r_2 &\qquad & \textrm{on } \partial P
\end{aligned}
\right.
\end{equation}
where all the operators are to be intended in weak sense. Below, $\N$ will denote the outward normal to the polyhedron's boundary (face by face).

\begin{lem}\label{L:ghostlemma}
Let the auxiliary three-dimensional vector field ${\bf r} = \nabla_\tau r_1 + {\bf n} \, r_2$ living on $\partial P$. Assume that such vector field ${\bf r}$ is (component-wise) in $H^{1/2}(\partial P)$. Then it holds 
$$
| u |_{2,P} \le C |{\bf r}|_{1/2,\partial P} .
$$
The constant $C$ here above depends only on the star-shapedness of the polyhedron (the constant appearing in assumption (A1)-(A2)) and the Lipschitz constant of its boundary.
\end{lem}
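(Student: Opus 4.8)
The statement is a standard elliptic-regularity-type bound for the biharmonic Dirichlet problem in a polyhedron, but formulated so that the right-hand side involves only the single $H^{1/2}(\partial P)$ seminorm of the assembled vector field $\mathbf{r} = \nabla_\tau r_1 + \mathbf{n}\, r_2$, rather than the separate norms of $r_1$ and $r_2$. My first move would be to observe that $\mathbf{r}$ is precisely the (full, three-dimensional) trace of $\nabla v$ on $\partial P$: indeed, decomposing $\nabla v|_f$ into tangential and normal parts on each face $f$ gives $\nabla v|_f = \nabla_\tau(v|_f) + \mathbf{n}_f\,\partial_n v = \nabla_\tau r_1 + \mathbf{n}_f\, r_2 = \mathbf{r}|_f$. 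So the content of the lemma is: \emph{if the biharmonic function $v$ has a gradient whose boundary trace lies in $[H^{1/2}(\partial P)]^3$, then $|v|_{2,P} \lesssim |\mathbf{r}|_{1/2,\partial P}$}, with the constant depending only on the geometric constants in (A1)--(A2) and the Lipschitz character of $\partial P$.

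\textbf{Key steps.} First I would pass from $v$ to its gradient: set $\mathbf{w} := \nabla v$, a vector field on $P$. Since $\Delta^2 v = 0$, each component of $\mathbf{w}$ is harmonic (because $\Delta \mathbf{w} = \nabla(\Delta v)$ and $\Delta v$ is harmonic, hence $\Delta(\Delta v) = 0$ gives $\Delta w_i = \partial_i(\Delta v)$ which is harmonic — more directly, one can argue that $\nabla v$ solves a well-posed problem whose $H^1$-seminorm is controlled). Actually the cleanest route: note $|v|_{2,P} = |\nabla v|_{1,P} = |\mathbf{w}|_{1,P}$, and estimate $|\mathbf{w}|_{1,P}$ via a harmonic-extension / trace argument. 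The vector field $\mathbf{w}$ need not be exactly componentwise harmonic, so instead I would use the variational characterization directly: $v$ minimizes $\int_P |\nabla^2 \phi|^2$ (equivalently, solves $a_P^\Delta(v,\phi)=0$ for all $\phi \in H^2_0(P)$) among $H^2(P)$ functions with the prescribed traces $v|_{\partial P}=r_1$, $\partial_n v|_{\partial P}=r_2$. Hence $|v|_{2,P} \le |\widetilde v|_{2,P}$ for \emph{any} competitor $\widetilde v \in H^2(P)$ with the same boundary data. It therefore suffices to construct one such extension $\widetilde v$ with $|\widetilde v|_{2,P} \lesssim |\mathbf{r}|_{1/2,\partial P}$.

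\textbf{Constructing the extension.} This is the heart of the argument. I would build $\widetilde v$ from a lifting of $\mathbf{r}$: let $\mathbf{W} \in [H^1(P)]^3$ be a continuous extension of the trace $\mathbf{r}$, with $|\mathbf{W}|_{1,P} + h_P^{-1}\|\mathbf{W} - (\text{const})\|_{0,P} \lesssim |\mathbf{r}|_{1/2,\partial P}$ (a standard scaled trace-inverse / extension estimate on Lipschitz domains, where the geometric dependence of the constant is exactly as claimed). The difficulty is that $\mathbf{W}$ is a generic $H^1$ vector field, whereas I need a \emph{gradient} field $\nabla \widetilde v$ whose trace is $\mathbf{r}$ — and a generic $\mathbf{W}$ is not curl-free. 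The resolution uses the compatibility built into $\mathbf{r}$: because $r_1 \in C^0(\partial P)$ is a genuine function on the whole skeleton-connected boundary and $\mathbf{r} = \nabla_\tau r_1 + \mathbf{n} r_2$, the tangential trace of $\mathbf{r}$ on $\partial P$ is itself a surface gradient, so $\mathbf{r}$ is (up to the normal component) integrable. One then first extends $r_1$ to some $R_1 \in H^{3/2}(P)$ with $|R_1|_{3/2,P}\lesssim \|r_1\|_{\ldots}$, handles the remaining normal-derivative discrepancy $r_2 - \partial_n R_1 \in H^{1/2}(\partial P)$ by a second lifting that vanishes on $\partial P$ to zeroth order, and checks that the resulting $\widetilde v$ has $|\widetilde v|_{2,P}$ bounded by $|\mathbf{r}|_{1/2,\partial P}$ alone — the point being that $|\nabla \widetilde v|_{1,P}$, not $\|\widetilde v\|_{2,P}$, is what appears, so the zeroth-order part of $r_1$ (its mean value) drops out. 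I expect the main obstacle to be making this extension construction fully rigorous while keeping the constant's dependence \emph{only} on the star-shapedness and Lipschitz constants (and not, say, on the number of faces): the natural tool is that by Remark~\ref{L:tets} each $P$ is a union of uniformly shape-regular tetrahedra sharing a central vertex, on each of which classical trace/extension theorems apply with controlled constants, and then one patches these together with a partition of unity subordinate to that tetrahedral decomposition, tracking that the overlap and gradient-of-partition-of-unity terms scale correctly with $h_P$. Once $\widetilde v$ is in hand, the minimality of $v$ closes the estimate immediately.
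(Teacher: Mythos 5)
The paper does not actually prove this lemma: it is stated as a known continuous-dependence result and delegated to the references (Girault--Raviart, Barton--Mayboroda), so there is no in-paper argument to compare yours against. That said, the first half of your reduction is correct and clean: identifying $\mathbf{r}$ with the full trace of $\nabla v$, and using that the weak biharmonic solution minimizes $\phi\mapsto\int_P|\nabla^2\phi|^2$ over the affine class with traces $(r_1,r_2)$, so that it suffices to exhibit one competitor $\widetilde v$ with $|\widetilde v|_{2,P}\lesssim|\mathbf{r}|_{1/2,\partial P}$. This is exactly the spirit of the cited literature, where the datum for the biharmonic Dirichlet problem on a Lipschitz domain is the Whitney array measured through the $H^{1/2}$ norm of the assembled gradient trace. (You were also right to abandon the ``componentwise harmonic'' idea: $\Delta\partial_i v=\partial_i\Delta v$ need not vanish.)

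The gap is that the entire content of the lemma is then compressed into the existence of that competitor, and your construction of it does not close. A two-step lifting (extend $r_1$, then correct the normal derivative) produces, via the standard trace-lifting theorems you invoke, a bound of the form $\|\widetilde v\|_{2,P}\lesssim\|r_1\|_{3/2,\partial P}+\|r_2\|_{1/2,\partial P}$; converting this into $|\mathbf{r}|_{1/2,\partial P}$ alone is not merely a matter of the ``mean value of $r_1$ dropping out''. One must mod out by the full affine part: subtract from $v$ the affine function $\ell(x)=\bar{\mathbf{r}}\cdot x$, with $\bar{\mathbf{r}}$ the mean of $\mathbf{r}$ over $\partial P$ (this changes neither $|v|_{2,P}$ nor $|\mathbf{r}|_{1/2,\partial P}$), and then invoke a scaled Poincar\'e inequality on $\partial P$ to control $\|\mathbf{r}-\bar{\mathbf{r}}\|_{1/2,\partial P}$ by the seminorm. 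More seriously, on a polyhedron the $H^{3/2}(\partial P)$ norm of $r_1$ required by the lifting theorem is a global quantity encoding compatibility across edges; the hypothesis $\mathbf{r}\in[H^{1/2}(\partial P)]^3$ is exactly what encodes it (as the paper remarks), but proving that every such compatible datum admits an $H^2$ lifting with a constant depending only on the star-shapedness and Lipschitz constants --- uniformly over the mesh family, independently of the number of faces --- is the hard analytic step, and the partition-of-unity patching over the tetrahedral subdivision of Remark~\ref{L:tets} is only gestured at (for $H^2$/$H^{3/2}$ traces such patching is delicate because cutoff functions destroy normal-derivative data). A minor slip: the lifting $R_1$ of $r_1\in H^{3/2}(\partial P)$ should live in $H^2(P)$, not $H^{3/2}(P)$. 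In short: correct strategy, but the step you defer to ``standard extension theorems'' is precisely the nonstandard part that the authors outsource to the cited references.
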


Note that the condition ${\bf r} \in [H^{1/2}(\partial P)]^3$ takes
into account the necessary compatibility conditions at the edges.
We can now state the following interpolation result for the $\Wh$ space. 

\begin{prop}\label{L:wh}
Let $u \in H^3(\Omega)$ and $w_I$ the only function in $\Wh$ that interpolates the nodal values of $u$ and $\nabla u$ at all vertexes of $\Omega_h$. Then it holds
$$
| u - w_I |_{2,\Omega} \lesssim h |u|_{3,\Omega_h} .
$$
\end{prop}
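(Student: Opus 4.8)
The plan is to localize the estimate element by element and reduce the three-dimensional interpolation error to two-dimensional face estimates via Lemma~\ref{L:ghostlemma}. Fix a polyhedron $P$ and set $e = u - w_I$. The key observation is that $e$ is \emph{not} biharmonic, since $u$ need not be, so I would first subtract the biharmonic extension. More precisely, let $u^P$ be the solution of $\Delta^2 u^P = 0$ in $P$ with the same Dirichlet trace and normal derivative as $u$ on $\partial P$; then $w_I|_P$ is exactly the function in $\Wh(P)$ that shares the vertex values of $u$ (and hence of $u^P$), so $w_I|_P - u^P$ is biharmonic in $P$ and I can apply Lemma~\ref{L:ghostlemma} to it. Writing $u - w_I = (u - u^P) + (u^P - w_I)$, the first term is bounded by standard elliptic regularity / the error of the biharmonic projection: $|u - u^P|_{2,P} \lesssim h_P |u|_{3,P}$ on shape-regular star-shaped domains (this is where $u \in H^3$ and assumptions (A1)-(A3) enter). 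For the second term, Lemma~\ref{L:ghostlemma} gives $|u^P - w_I|_{2,P} \le C |{\bf r}|_{1/2,\partial P}$ where ${\bf r} = \nabla_\tau(u^P - w_I)|_{\partial P} + {\bf n}\,\partial_n(u^P - w_I)|_{\partial P}$, i.e. the full surface gradient of the boundary datum.

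Next I would estimate $|{\bf r}|_{1/2,\partial P}$ face by face. On each face $f$, the tangential part $\nabla_\tau(u^P - w_I)|_f$ is controlled by the two-dimensional $C^1$-VEM interpolation error in $V_h^\Delta(f)$, and the normal part $\partial_{n_f}(u^P - w_I)|_f$ by the two-dimensional $C^0$-VEM interpolation error in $V_h^\nabla(f)$ — both for the traces of $u^P$ (equivalently, up to the $|u - u^P|$ term already accounted for, of $u$). Here one invokes the known 2D interpolation estimates for these spaces (cited in the paper as~\cite{volley,ABSVu,BM13,cinesi_plates}), together with trace inequalities to pass from $H^3(P)$-regularity of $u$ to the appropriate fractional Sobolev regularity of the traces on faces and their skeletons. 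The scaling then produces $|{\bf r}|_{1/2,\partial P} \lesssim h_P |u|_{3,P}$ (up to, perhaps, a neighborhood of $P$ because traces on $\partial P$ only ``see'' $u$ on $P$, but here $u$ is globally $H^3$, so this is harmless). Summing the squares over all $P \in \Omega_h$ and taking square roots yields the claimed global bound $|u - w_I|_{2,\Omega} \lesssim h |u|_{3,\Omega_h}$.

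The main obstacle I anticipate is the careful bookkeeping of fractional-order norms on $\partial P$: one must verify that ${\bf r} \in [H^{1/2}(\partial P)]^3$ (the compatibility at edges, flagged in the remark after Lemma~\ref{L:ghostlemma}), and one must track all scaling constants through the trace theorems so that the powers of $h_P$ come out exactly as $h_P^1$ and all hidden constants depend only on the shape-regularity parameters of (A1)-(A3), not on $P$ or $h$. A subtle point is that the two-dimensional interpolation estimates must be applied to the \emph{traces} of $u$ (or $u^P$), which live only in some $H^{s}(f)$ with $s$ strictly between the values needed for the sharp 2D estimates and those automatically provided by $u\in H^3(\Omega)$; reconciling these requires using the $C^0$-continuity of the traces along the skeleton $S_P$ (which is built into the definitions of $\VF$ and $\VFN$) plus interpolation between Sobolev spaces. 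Once these technical estimates are in place, assembling the pieces is routine.
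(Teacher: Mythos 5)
Your decomposition $u-w_I = (u-u^P) + (u^P - w_I)$ is exactly the paper's splitting into $\eh$ (zero boundary data, $\Delta^2\eh=\Delta^2 u$) and $\et$ (the biharmonic part with boundary data $u-w_I$, $\partial_n(u-w_I)$), and you bound both pieces by the same mechanisms the paper uses: a duality/Poincar\'e argument yielding $h_P|u|_{3,P}$ for the first, and Lemma~\ref{L:ghostlemma} combined with face-by-face two-dimensional $C^1$/$C^0$ VEM interpolation estimates and space interpolation for the $H^{1/2}(\partial P)$ norm for the second. The proposal is correct and essentially identical in approach to the paper's proof (the paper additionally exploits that the two pieces are orthogonal in the $H^2$ seminorm, but the triangle inequality you use loses only a constant).
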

\begin{proof}
We prove a local interpolation estimate, the global one following immediately by summing over all the elements. 
Let $P \in \Omega_h$. We start by splitting the error $u - w_I = \et + \eh$ where
$$
\begin{aligned}
& \Delta^2 \et = 0 \textrm{ in } P , \ \ \et = u-w_I \textrm{ on } \partial P, \ \
\partial_n \et = \partial_n (u-w_I) \textrm{ on } \partial P , \\
& \Delta^2 \eh = \Delta^2 (u-w_I) \textrm{ in } P , \ \ \eh = 0 \textrm{ on } \partial P, \ \
\partial_n \eh = 0 \textrm{ on } \partial P . \\
\end{aligned}
$$ 
An integration by parts easily shows that
\begin{equation}\label{L:split}
| u - w_I |_{2,P}^2 = | \eh |_{2,P}^2 + | \et |_{2,P}^2 ,
\end{equation}
so that we need to bound the two terms above.
For the first term, we again integrate by parts twice and obtain, also recalling that 
$\Delta^2 w_I = 0$ by definition of \coor{$W_h(P)$},
\begin{eqnarray*}
| \eh |_{2,P}^2 &=& \int_P (\Delta^2 \eh) \, \eh\dPP \le \| \Delta^2 \eh \|_{-1,P} \, \| \eh \|_{1,P}\\
&=& \| \Delta^2 u \|_{-1,P} \, \| \eh \|_{1,P} \lesssim \| \Delta^2 u \|_{-1,P} \, | \eh |_{1,P} ,
\end{eqnarray*}
where in the last step we used a Poincar\'e inequality ($\eh$ vanishes on the boundary of $P$).
The first multiplicative term in the right hand side is bounded by $| u |_{3,P}$ (to show this it is sufficient to apply the definition of dual norm and integrate once by parts). The second term corresponds to the $L^2$ norm of $\nabla\eh$, that is a (vector valued) function in $H^1(P)$ vanishing on the boundary (see definition of $\eh$). Therefore a scaled Poincar\'e inequality immediately yields 
\begin{equation}\label{L:bound1}
| \eh |_{2,P}^2 \le | u |_{3,P} \, \| \nabla \eh \|_{0,P} \lesssim h_P | u |_{3,P} \, | \eh |_{2,P} 
\end{equation}
that gives the desired bound for the first term in \eqref{L:split}.

For the second term in~\eqref{L:split}, we make use of Lemma~\ref{L:ghostlemma} and the definition of $\et$. Note that, due to the regularity of $u$ and the definition of \coor{$W_h(P)$}, the boundary data in the definition of $\eh$ satisfies the hypotheses of the Lemma. Moreover, it is trivial to check that the vector field ${\bf r}$ appearing in Lemma~\ref{L:ghostlemma} in this case is nothing but $\nabla (u-w_I)$. Therefore we obtain the bound
\begin{equation}\label{L:eq1}
| \et |_{2,P}^2 \lesssim | \nabla (u-w_I) |_{1/2,\partial P}^2 .
\end{equation}

Note that the above bound is uniform (in $P$) since the elements $P$ are star shaped and have uniformly bounded Lipschitz constant. Indeed, the observation in Remark~\ref{L:tets} easily implies that each polyhedron $P$ has a uniformly Lipschitz continuous boundary (actually, it holds also under the assumption (A1) alone, as shown in~\cite{BOTTI2018278}).

By definition of the face spaces $\VhF$ and $\VhFn$, the components of the vector field $\nabla w_I$ are in $H^1(f)$ for every face $f \in \partial P$. Since by definition of $\Wh(P)$ the gradient of $w_I$ is continuous on the skeleton, we have $\nabla w_I \in [H^1(\partial P)]^3$. Standard trace estimates, recalling $u \in H^3(P)$ imply an analogous property $\nabla u \in [H^1(\partial P)]^3$. Therefore, first by space interpolation theory and then summing on faces, from~\eqref{L:eq1} we get
\begin{equation}\label{L:eq1}
\begin{aligned}
| \et |_{2,P}^2 & \lesssim \| \nabla (u-w_I) \|_{0,\partial P} | \nabla (u-w_I) |_{1,\partial P}  \\
& \lesssim \Big( \sum_{f \in \partial P} \| \nabla (u-w_I) \|_{0,f}^2 \Big)^{1/2} 
\Big( \sum_{f \in \partial P} | \nabla (u-w_I) |_{1,f}^2 \Big)^{1/2} .
\end{aligned}
\end{equation}
We note that, in both terms above, one can split for each face $f$
$$
\nabla (u-w_I)|_f = (\nabla (u-w_I)|_f)_{\boldsymbol \tau} + (\nabla (u-w_I)|_f \cdot \N_f)\N_f\,,
$$
that is the tangential and normal components of the vector $\nabla (u-w_I)|_f$. 
Therefore, for every face $f$
\begin{equation}\label{L:eq2}
\begin{aligned}
\| \nabla (u-w_I) \|_{0,f} 
& \le \| (\nabla (u-w_I)|_f)_{\boldsymbol \tau} \|_{0,f} + \| \partial_n (u-w_I) \|_{0,f} \\
& = | u-w_I |_{1,f} + \| \partial_n u - \partial_n w_I \|_{0,f}
\end{aligned}
\end{equation}
and analogously
\begin{equation}\label{L:eq3}
| \nabla (u-w_I) |_{1,f} 
\le | u-w_I |_{2,f} + | \partial_n u - \partial_n w_ I|_{1,f} .
\end{equation}
We now need to recall that the restriction to faces of the space $\Wh(P)$ corresponds, by definition, to $C^1$ virtual spaces in 2D~\cite{Brezzi:Marini:plates,BM13,ABSVu} and that its normal derivative corresponds to $C^0$ virtual spaces in 2D~\cite{volley, projectors}.
Therefore the bounds for the first term in the right-hand side of~\eqref{L:eq2} and for the first term in the right-hand side of~\eqref{L:eq3} follow from known interpolation theory for $C^1$ virtual spaces in 2D, see~\cite{Paper_con_Gonzalo}. The bounds for the second term in the right-hand side of~\eqref{L:eq2} and for the second term in the right-hand side of~\eqref{L:eq3} follow from known interpolation theory for $C^0$ virtual spaces in 2D, see \cite{BLR-stab,Brenner}. Therefore, from \eqref{L:eq1} combined with~\eqref{L:eq2}-\eqref{L:eq3}, we get
\begin{equation}\label{L:bound2}
%% --------------------------------------------
% Paper_con_Gonzalo:
% Beirao da Veiga, Mora, Rivera ``Virtual elements for a shear-deflection formulation of Reissner–Mindlin plates''
% Mathematics of Computation, 2018 ....
% Su scholar non ci sono molte informazioni... non so se va bene la referenza :( 
%% --------------------------------------------
\begin{aligned}
| \et |_{2,P}^2 & \lesssim 
\Big( \sum_{f \in \partial P}  h_f^3 |u|_{5/2,f}^2 + h_f^3 |\partial_n u|_{3/2,f}^2 \Big)^{1/2} 
\Big( \sum_{f \in \partial P} h_f |u|_{5/2,f}^2 + h_f |\partial_n u|_{3/2,f}^2 \Big)^{1/2} \\
& \lesssim h_P^2 |u|_{3,P}^2 ,
\end{aligned}
\end{equation}
where the last bound above follows from a (face by face) trace inequality.
The local result now follows easily combining~\eqref{L:split} with~\eqref{L:bound1} and~\eqref{L:bound2}
\begin{equation}\label{L:loc}
|u - w_I |_{2,P} \lesssim h_P |u|_{3,P} \qquad \forall P \in \Omega_h.
\end{equation}
\end{proof}

% -----------------------------------------------------------------
\subsection{Interpolation estimates for $V_h$}
% -----------------------------------------------------------------

We have the following result.

\begin{prop}\label{L:vh}
Let $u \in H^3(\Omega)$ and $u_I$ the only function in $V_h$ that interpolates the nodal values of $u$ and $\nabla u$ at all vertexes of $\Omega_h$. Then it holds
$$
| u - u_I |_{2,\Omega} \lesssim h |u|_{3,\Omega_h} .
$$
\end{prop}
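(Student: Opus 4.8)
The plan is to reduce the interpolation estimate for $V_h$ to the already-established estimate for the simpler space $\Wh$ (Proposition~\ref{L:wh}). The key observation is that both $V_h(P)$ and $\Wh(P)$ have the same boundary behaviour and the same degrees of freedom $\DZ$, $\DU$; they differ only in the interior, since functions in $\Wh(P)$ are biharmonic while functions in $V_h(P)$ have $\Delta^2 v_h \in \P_2(P)$ together with the enhancement constraint \eqref{eqn:VirtOnPoly}. So let $w_I \in \Wh$ be the interpolant of $u$ from Proposition~\ref{L:wh} and let $u_I \in V_h$ be the interpolant of $u$; their difference $u_I - w_I$ vanishes at all degrees of freedom, hence vanishes on $\partial P$ together with its normal derivative on each face. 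The plan is then to estimate $|u - u_I|_{2,P} \le |u - w_I|_{2,P} + |w_I - u_I|_{2,P}$ and bound the second term purely in terms of the first (and polynomial data).

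First I would set $\delta_h := u_I - w_I \in H^2(P)$, which satisfies $\delta_h = 0$ and $\partial_n \delta_h = 0$ on $\partial P$. Because $\delta_h$ vanishes to first order on the boundary, an integration by parts twice gives $|\delta_h|_{2,P}^2 = \int_P (\Delta \delta_h)^2 \dPP = \int_P \delta_h \, \Delta^2 \delta_h \dPP$. Now $\Delta^2 \delta_h = \Delta^2 u_I - \Delta^2 w_I = \Delta^2 u_I \in \P_2(P)$ (using $\Delta^2 w_I = 0$), so we may write this as $\int_P \delta_h \, p_2 \dPP$ with $p_2 := \Delta^2 u_I$. Using the enhancement property \eqref{eqn:VirtOnPoly} that defines $V_h(P)$, we have $\int_P \delta_h\, p_2 \dPP = \int_P u_I\, p_2\dPP - \int_P w_I\, p_2 \dPP = \int_P \Pi^\Delta_P u_I\, p_2 \dPP - \int_P w_I\, p_2 \dPP$; alternatively, and more cleanly, since $u_I \in V_h(P)$ satisfies \eqref{eqn:VirtOnPoly} we can replace $u_I$ inside the $L^2$ pairing against any $\P_2$ by its $\Pi^\Delta_P$-projection, which coincides with $\Pi^0_P u_I$. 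The upshot is a representation of $|\delta_h|_{2,P}^2$ as an $L^2(P)$ pairing of $\delta_h$ (or of computable projections of $u_I, w_I$) against a fixed polynomial of degree $2$.

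From here the bound follows by Cauchy--Schwarz and a Poincaré inequality: $|\delta_h|_{2,P}^2 \lesssim \|\Delta^2 u_I\|_{0,P}\,\|\delta_h\|_{0,P} \lesssim \|\Delta^2 u_I\|_{0,P}\, h_P^2\, |\delta_h|_{2,P}$, so $|\delta_h|_{2,P} \lesssim h_P^2 \|\Delta^2 u_I\|_{0,P}$. It then remains to show $h_P^2 \|\Delta^2 u_I\|_{0,P} \lesssim h_P |u|_{3,P}$, equivalently $\|\Delta^2 u_I\|_{0,P} \lesssim h_P^{-1} |u|_{3,P}$. For this one uses that $\Delta^2 u_I$ is the $L^2(P)$-orthogonal projection onto $\P_2(P)$ of $\Delta^2 w_I = 0$ plus lower order terms — more precisely one estimates $\|\Delta^2 u_I\|_{0,P}$ by duality against $\P_2$, integrates by parts twice moving derivatives onto the test polynomial, and uses that $u_I$ agrees with $u$ at the nodal degrees of freedom together with the bound already available on $|u - w_I|_{2,P}$ and $|u - u_I|_{2,P}$ on the boundary via the face interpolation estimates of Proposition~\ref{L:wh}. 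A cleaner route: bound $\|\Delta^2 u_I\|_{0,P}$ by testing against $p_2 \in \P_2(P)$, writing $\int_P \Delta^2 u_I\, p_2 = \int_P u_I \Delta^2 p_2 + (\text{boundary terms on } \partial P)$; since $\Delta^2 p_2 = 0$ for $p_2 \in \P_2$, only the boundary terms survive, and these involve $u_I|_{\partial P}$ and $\partial_n u_I|_{\partial P}$, which equal the face-interpolants of $u|_{\partial P}$ and $\partial_n u|_{\partial P}$; standard scaling plus the 2D VEM face interpolation estimates control them by $h_P^{1/2} |u|_{3,P}$-type quantities, delivering the needed $h_P^{-1}|u|_{3,P}$ after accounting for the $\|p_2\|$ normalization via an inverse inequality on $\P_2(P)$.

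The main obstacle I anticipate is the last step: showing $\|\Delta^2 u_I\|_{0,P} \lesssim h_P^{-1}|u|_{3,P}$ with a constant uniform in the mesh. This requires carefully tracking the scaling of the boundary terms produced by integration by parts against a degree-2 polynomial (each integration by parts against $\P_2$ leaves genuine boundary contributions because $\Delta^2 p_2 \equiv 0$), combined with inverse estimates on $\P_2(P)$ under assumptions (A1)--(A3) and the 2D face interpolation bounds. Everything else — the orthogonal splitting $|\delta_h|_{2,P}^2 = \int_P(\Delta\delta_h)^2$, the exploitation of the enhancement constraint \eqref{eqn:VirtOnPoly}, the Poincaré inequalities, and the triangle inequality reducing to Proposition~\ref{L:wh} — is routine. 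Summing the local bounds $|u - u_I|_{2,P} \lesssim h_P |u|_{3,P}$ over $P \in \Omega_h$ then yields the global estimate.
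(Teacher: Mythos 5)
Your overall strategy---write $u_I = w_I + \delta_h$ with $\delta_h := u_I - w_I \in H^2_0(P)$, note $\Delta^2\delta_h = \Delta^2 u_I \in \P_2(P)$, and reduce everything to Proposition~\ref{L:wh}---is essentially the paper's: the correction $\varphi$ constructed there through the mixed problem \eqref{L:rhopbl} is exactly your $\delta_h$ (with $-p=\Delta^2 u_I$ as Lagrange multiplier), and the preliminary steps ($|\delta_h|_{2,P}^2=\int_P \delta_h\,\Delta^2\delta_h\dPP$, the Poincar\'e inequality giving $|\delta_h|_{2,P}\lesssim h_P^2\|\Delta^2 u_I\|_{0,P}$) are correct. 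The genuine gap is exactly where you locate the ``main obstacle'', and the route you sketch for it fails. If you test $\Delta^2 u_I$ against $p_2\in\P_2(P)$ and integrate by parts until $\Delta^2 p_2=0$ kills the volume term, the surviving boundary integrals are of the form $\int_{\partial P}\partial_n(\Delta u_I)\,p_2$ and $\int_{\partial P}\Delta u_I\,\partial_n p_2$, i.e.\ they involve traces of \emph{second and third} derivatives of the virtual function $u_I$, not just $u_I|_{\partial P}$ and $\partial_n u_I|_{\partial P}$. Those traces are controlled neither by the degrees of freedom $\DZ$, $\DU$ nor by the 2D face interpolation theory, so the claimed bound $\|\Delta^2 u_I\|_{0,P}\lesssim h_P^{-1}|u|_{3,P}$ is unsupported as written.

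The missing ingredient is the inf-sup (bubble-function) estimate that the paper's mixed formulation supplies: for $q\in\P_2(P)$, using the quartic bubble $b_T$ on one of the shape-regular tetrahedra of Remark~\ref{L:tets}, one proves $\|q\|_{0,P}\lesssim h_P^{-2}\sup_{v\in H^2_0(P)}\big(\int_P q\,v\dPP\big)/|v|_{2,P}$. Applied to $q=\Delta^2\delta_h$ this only gives $\|\Delta^2 u_I\|_{0,P}\lesssim h_P^{-2}|\delta_h|_{2,P}$, which is circular by itself; the loop is closed through the enhancement constraint. Since $\Pi^\Delta_P\delta_h=0$ (because $\nabla\delta_h$ vanishes on $\partial P$ and $\nabla^2 p_2$ is constant), the constraint \eqref{eqn:VirtOnPoly} for $u_I$ yields the identity $\int_P\delta_h\,q\dPP=\int_P(\Pi^\Delta_P w_I-w_I)\,q\dPP$ for all $q\in\P_2(P)$; hence $|\delta_h|_{2,P}^2=\int_P(\Pi^\Delta_P w_I-w_I)\,\Delta^2\delta_h\dPP\le\|\Pi^\Delta_P w_I-w_I\|_{0,P}\,\|\Delta^2\delta_h\|_{0,P}\lesssim h_P^{-2}\|\Pi^\Delta_P w_I-w_I\|_{0,P}\,|\delta_h|_{2,P}$, and a Poincar\'e-type inequality (exploiting the vanishing $\P_1$ boundary moments of $\Pi^\Delta_P w_I-w_I$), the $H^2$-minimality of $\Pi^\Delta_P$, polynomial approximation and Proposition~\ref{L:wh} then give $|\delta_h|_{2,P}\lesssim h_P|u|_{3,P}$. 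Your writeup reaches $\int_P\Pi^\Delta_P u_I\,p_2\dPP-\int_P w_I\,p_2\dPP$ but never uses $\Pi^\Delta_P u_I=\Pi^\Delta_P w_I$ to extract this identity, and without it together with the bubble estimate the argument does not close.
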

\begin{proof}
Given \coor{$u \in H^3(\Omega)$}, let $w_I$ be its interpolant in $\Wh$. We fix our attention on a generic polyhedron $P\in\Omega_h$, the global estimates will then follow from the local ones by summing over all elements.
Moreover let the auxiliary space $Q = \P_2(P)$. 
Now we consider the following problem in mixed form 
\begin{equation}\label{L:rhopbl}
\left\{
\begin{aligned}
& \textrm{Find } \varphi \in H^2_0(P), \ p \in Q \textrm{ such that}  \\
& \int_P \nabla^2 \varphi : \nabla^2 v \dPP + \int_P p\,v \dPP= 0 \qquad \forall v \in H^2_0(P) \\
& \int_P \varphi\,q \dPP = \int_P \big( \Pi^\Delta_P (w_I) - w_I \big) q \dPP \qquad \forall  q \in Q .
\end{aligned}
\right.
\end{equation}
We endow the space $Q$ with the norm
$$
\| q \|_Q := h_P^2 \| q \|_{0,P} \quad \forall q \in Q .
$$
Problem~\eqref{L:rhopbl} is a standard problem in mixed form. Since the coercivity on the kernel is clearly guaranteed, in order to prove its well posedness we need only to check the inf-sup condition (see for instance~\cite{Bo-Bre-For}). Given any $q \in Q$, let $T$ be any one of the tetrahedra of Remark~\ref{L:tets} and let $b_T$ be the standard quartic bubble on $T$. Then, noting that $b_T^2 q \in H^2_0(P)$, standard properties of polynomials yield 
$$
\sup_{v \in H^2_0(P)} \frac{\int_P q\,v\dPP}{| v |_{2,P}}
\ge \frac{\int_T q \, (b_T^2 q)\dPP}{| b_T^2 q |_{2,T}} \gtrsim
\frac{\|q\|^2_{0,T}}{h_P^{-2} \| q \|_{0,T}} = h_P^2\| q \|_{0,T} \gtrsim h_P^2\| q \|_Q ,
$$
that is the inf-sup condition for problem~\eqref{L:rhopbl}.
Since~\eqref{L:rhopbl} is well posed, we have
\begin{eqnarray*}
| \varphi |_{2,P} &\lesssim&  \|   \Pi^\Delta_P (w_I) - w_I  \|_{Q^\star}
= \sup_{q \in Q} \frac{\int_P \big( \Pi^\Delta_P (w_I) - w_I \big)\,q\dPP}{h_P^2 \| q \|_{0,P}}\\
&\le& h_P^{-2} \|   \Pi^\Delta_P (w_I) - w_I  \|_{0,P} .
\end{eqnarray*}
Since by definition see~\eqref{eqn:piDelta}
$$
\int_{\partial P} \Big( \Pi^\Delta_P (w_I) - w_I \Big) p_1\dpPP = 0\qquad\forall p_1 \in \P_1(P)\,,
$$
by a Poincar\'e-type inequality (the standard proof being omitted for the sake of brevity)
the above bound becomes
$$
| \varphi |_{2,P} \lesssim |   \Pi^\Delta_P (w_I) - w_I  |_{2,P} .
$$
By a triangle inequality (and recalling that the operator $\Pi^\Delta_P$ is a projection operator onto $\P_2(P)$ minimizing the distance in the $H^2$ seminorm) the above bound  leads to
\begin{equation}
\begin{aligned}
| \varphi |_{2,P} & \lesssim |   \Pi^\Delta_P (w_I - u)  |_{2,P}
+ |\Pi^\Delta_P (u) - u  |_{2,P} + |   u - w_I  |_{2,P} \\
& \le |\Pi^\Delta_P (u) - u  |_{2,P} + 2 |  u - w_I  |_{2,P} .
\end{aligned}
\end{equation}
The first term above is bounded by standard polynomial approximation, while the second one  is bounded using \eqref{L:loc}. We get
\begin{equation}\label{L:rho-bound}
| \varphi |_{2,P} \lesssim h_P |  u  |_{3,P} .
\end{equation}
We are now ready to present the interpolant in the $V_h$ space, that is $u_I = w_I + \varphi$. 
We first check that $u_I \in V_h$, see definition~\eqref{eqn:VirtOnPoly}. 
\begin{itemize}
\item $u_I$ satisfies the conditions at the boundary since $\varphi$ and $\partial_n \varphi$ vanish at $\partial P$;
\item $\Delta^2 u_I \in \P_2(P)$ since $\Delta^2 w_I = 0$ and we deduce that $\Delta^2 \varphi = -p~\in~\P_2(P)$ from the first equation of~\eqref{L:rhopbl};
\item It is easy to check that, by definition of $\Pi^\Delta_P$ and integrating by parts, it holds $\Pi^\Delta_P(\varphi)=0$. Therefore, using the second equation of~\eqref{L:rhopbl}, it immediately follows that, 
for any $q\in \P_2(P)$,
$$
\int_P u_I\,q\dPP = \int_P (w_I + \varphi)\,q\dPP = \int_P \Pi^\Delta_P(w_I)\,q\dPP = \int_P \Pi^\Delta_P(u_I)\,q\dPP\,.
$$
\end{itemize}
Therefore $u_I \in V_h$ since it satisfies all conditions in the definition. 
Finally, the result follows from~\eqref{L:loc} and~\eqref{L:rho-bound}
$$
|u - u_I |_{2,P} \le  |u - w_I |_{2,P} + | \varphi |_{2,P} 
\lesssim h_P |u|_{3,P} .
$$
\end{proof}

\begin{cor}
Let $u \in H^3(\Omega)$ and $u_I$ the only function in $V_h$ that interpolates the nodal values of $u$ and $\nabla u$ at all vertexes of $\Omega_h$. Then it holds
$$
| u - u_I |_{m,\Omega} \lesssim h^{3-m} |u|_{3,\Omega_h}\quad \textrm{ for } m=0,1 .
$$
\end{cor}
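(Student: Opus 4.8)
The plan is to establish the estimate on each element $P\in\Omega_h$ and then sum over the mesh, treating $m=0$ in detail since $m=1$ follows by the same steps with one fewer power of $h_P$ to be gained. First I would recall the two decompositions already available: from the proof of Proposition~\ref{L:vh} we have $u_I=w_I+\varphi$ with $\varphi\in H^2_0(P)$ and $|\varphi|_{2,P}\lesssim h_P|u|_{3,P}$ by~\eqref{L:rho-bound}, and from the proof of Proposition~\ref{L:wh} we have $u-w_I=\et+\eh$, where $\eh\in H^2_0(P)$ with $|\eh|_{2,P}\lesssim h_P|u|_{3,P}$ by~\eqref{L:bound1}, while $\et$ solves $\Delta^2\et=0$ in $P$ with $\et=u-w_I$ and $\partial_n\et=\partial_n(u-w_I)$ on $\partial P$, and satisfies $|\et|_{2,P}\lesssim h_P|u|_{3,P}$ by~\eqref{L:bound2}. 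Hence $u-u_I=\et+\eh-\varphi$, and the whole task reduces to upgrading three $H^2$-seminorm bounds to $L^2$-norm bounds with one extra power of $h_P$.

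The contributions of $\eh$ and $\varphi$ are easy: since each of these functions \emph{and its normal derivative} vanishes on $\partial P$, a scaled Poincar\'e inequality applied twice gives $\|\eh\|_{0,P}\lesssim h_P^2|\eh|_{2,P}$ and $\|\varphi\|_{0,P}\lesssim h_P^2|\varphi|_{2,P}$, hence both are $\lesssim h_P^3|u|_{3,P}$ (and, applying Poincar\'e only once, $|\eh|_{1,P},|\varphi|_{1,P}\lesssim h_P^2|u|_{3,P}$).

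The hard part is the biharmonic lifting $\et$, which has nonzero boundary data, so the previous Poincar\'e argument fails. Here the plan is to use the elementary inequality $\|g\|_{0,P}^2\lesssim h_P\|g\|_{0,\partial P}^2+h_P^2|g|_{1,P}^2$ — valid for $g\in H^1(P)$ with a constant depending only on the shape-regularity of $P$ — applied once to $\et$ and once to each component of $\nabla\et$. This reduces $\|\et\|_{0,P}$ to the estimates of $\|\et\|_{0,\partial P}=\|u-w_I\|_{0,\partial P}$, of $\|\nabla\et\|_{0,\partial P}=\|\nabla(u-w_I)\|_{0,\partial P}$, and of $|\et|_{2,P}$, the last being already bounded by $h_P|u|_{3,P}$. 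For the two boundary norms I would argue face by face exactly as in the proof of Proposition~\ref{L:wh}: since on each face $f$ the traces $w_I|_f$ and $\partial_{n_f}w_I|_f$ are the two-dimensional $C^1$ and $C^0$ virtual element interpolants of $u|_f$ and $\partial_{n_f}u|_f$, the cited 2D interpolation theory combined with face-by-face trace inequalities gives $\|u-w_I\|_{0,\partial P}\lesssim h_P^{5/2}|u|_{3,P}$ and $\|\nabla(u-w_I)\|_{0,\partial P}\lesssim h_P^{3/2}|u|_{3,P}$ (the second bound is already implicit in the derivation of~\eqref{L:bound2}, and the first is one power better because the 2D $L^2$-interpolation error beats the $H^1$ one by one order). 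Substituting into the elementary inequality yields $\|\et\|_{0,P}\lesssim h_P^3|u|_{3,P}$; used only on $\nabla\et$ it gives $|\et|_{1,P}\lesssim h_P^2|u|_{3,P}$.

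Collecting the three pieces gives $\|u-u_I\|_{0,P}\lesssim h_P^3|u|_{3,P}$ and $|u-u_I|_{1,P}\lesssim h_P^2|u|_{3,P}$ for every $P$; summing over $\Omega_h$ yields the claim. I expect the step involving $\et$ to be the only real obstacle: because $\et$ carries the nonzero boundary data and does not vanish anywhere, one cannot simply Poincar\'e it down from the $H^2$-seminorm, so one must route through the $L^2(\partial P)$ norms of the 2D virtual interpolation errors while carefully tracking the powers of $h_P$ produced by the face trace inequalities; the two ``bubble'' terms $\eh$ and $\varphi$, by contrast, cost essentially nothing.
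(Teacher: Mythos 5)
Your argument is correct, but it follows a genuinely different route from the paper. The paper's proof is a three-line duality-free trick: let $\psi_I$ be the interpolant of $u-u_I$ in the standard lowest-order $C^0$ virtual element space in 3D (whose degrees of freedom are just the vertex values); since $u-u_I$ vanishes at every vertex, $\psi_I=0$, and the known $C^0$ VEM interpolation estimate then gives $|u-u_I|_{m,\Omega}=|(u-u_I)-\psi_I|_{m,\Omega}\lesssim h^{2-m}|u-u_I|_{2,\Omega}$, which combined with Proposition~\ref{L:vh} yields the claim. You instead re-open the decomposition $u-u_I=\widetilde e+\widehat e-\varphi$ and upgrade each $H^2$ bound by hand: scaled Poincar\'e (twice) for the two $H^2_0(P)$ pieces, and for the biharmonic lifting $\widetilde e$ a Friedrichs-type inequality $\|g\|_{0,P}^2\lesssim h_P\|g\|_{0,\partial P}^2+h_P^2|g|_{1,P}^2$ fed with $L^2(\partial P)$ bounds on $u-w_I$ and $\nabla(u-w_I)$ coming from the 2D face interpolation theory. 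The powers of $h_P$ all check out, so this works; what it costs you is two extra ingredients the paper never needs to invoke (the $L^2(f)$-accuracy of the 2D $C^1$ face interpolant, one order better than the $H^1(f)$ bound actually used in \eqref{L:bound2}, and the uniformity of the Friedrichs constant over the polyhedron family), and what it buys you is a fully element-local estimate that does not pass through the auxiliary global $C^0$ space. Both are legitimate; the paper's vanishing-interpolant argument is simply the more economical one.
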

\begin{proof}
Let $U_h$ denote the standard $C^0$ virtual element space of order 1 in 3D (see for instance \cite{projectors,apollo11}). The degrees of freedom of such space are simply given by the value at all mesh vertexes. Let $\psi_I \in U_h$ be the unique vertex interpolant of $(u - u_I)$. Since $u-w_I$ vanishes at all vertexes, $\psi_I=0$. Therefore, also using approximation estimates for $C^0$ virtual element spaces in 3D (see for instance \cite{Cangiani:apos,Brenner}), we get
$$
| u - u_I |_{m,\Omega} = | (u - u_I) - \psi_I |_{m,\Omega} \lesssim h^{2-m} | u - u_I |_{2,\Omega} 
$$
The result follows using Proposition \ref{L:vh}.
\end{proof}

\coor{
\begin{remark}
The above results could be easily extended to the case with lower regularity $u \in H^{s}$, $s > 5/2$, since in such case the above interpolants are still well defined. Instead, extending to the case $u \in H^{s}$ with $2 < s < 5/2$ would require a different kind of interpolation (in the Cl\'ement or Scott-Zhang spirit). 
\end{remark}
}

%%%%%%%%%%%%%%%%%%%%%%%%%%%%%%%%%%%%%%%%%%%%%%%%%%%%%%%%%%%%%
% \input{resultNew}
%%%%%%%%%%%%%%%%%%%%%%%%%%%%%%%%%%%%%%%%%%%%%%%%%%%%%%%%%%%%%

\section{Numerical results}\label{sec:numExe}

In this section we numerically validate the theory behind the $C^1$ virtual elements proposed in this paper.

\subsection{Domain discretization}

We will consider two different computational domains:
the standard unit cube $[0,\,1]^3$ and the truncated octahedron~\cite{patagonGeo}.
We discretize such geometries in three different ways.

\begin{itemize}
 \item \textbf{Structured}: the computational domain is decomposed by cubes inside the domain and 
 arbitrary shaped polyhedron close to the boundary, see Figure~\ref{fig:pataDisc} (a).
 When we take the unit cube as domain,
 this type of mesh becomes a standard structured decomposition composed by small cubes.
 \coor{\item \textbf{Tetra}: a Delaunay tetrahedral mesh of the input domain, see Figure~\ref{fig:pataDisc}~(b).}
 \item \textbf{CVT}: the domain is discretized via a Centroidal Voronoi Tessellation, i.e. 
 a Voronoi tessellation where the centroid of the Voronoi cells coincides with the control points of the cells.
 This kind of mesh can be computed via a standard Lloyd algorithm~\cite{cvtPaper}.
 In Figure~\ref{fig:pataDisc} (c) we show a CVT discretization of the truncated octahedron geometry.
 \item \textbf{Random}: refers to a Voronoi tessellation 
 where we randomly distributed the control points of the cells inside the domain and 
 we do not make any optimization on cells' shape, see Figure~\ref{fig:pataDisc} (d).
\end{itemize}

\begin{figure}[!htb]
\begin{center}
\begin{tabular}{cc}
\includegraphics[width=0.45\textwidth]{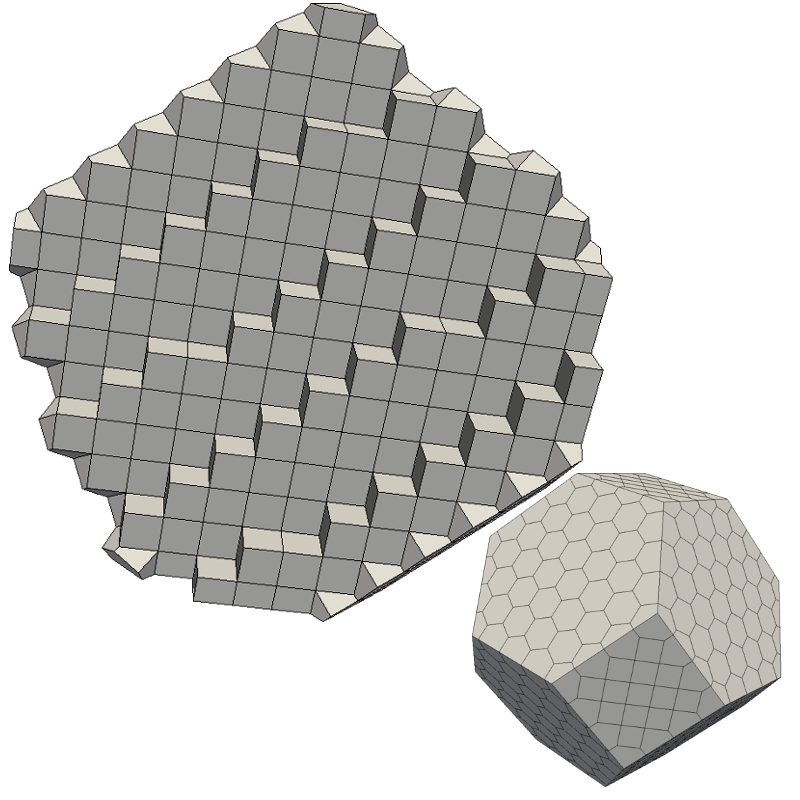} &
\includegraphics[width=0.45\textwidth]{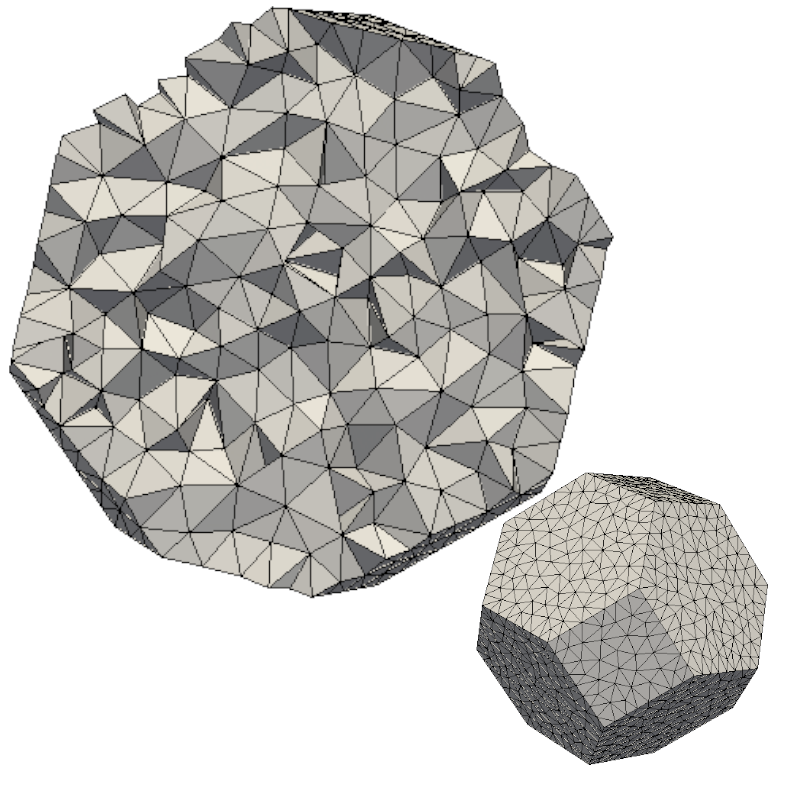} \\
(a) & (b) \\
\includegraphics[width=0.45\textwidth]{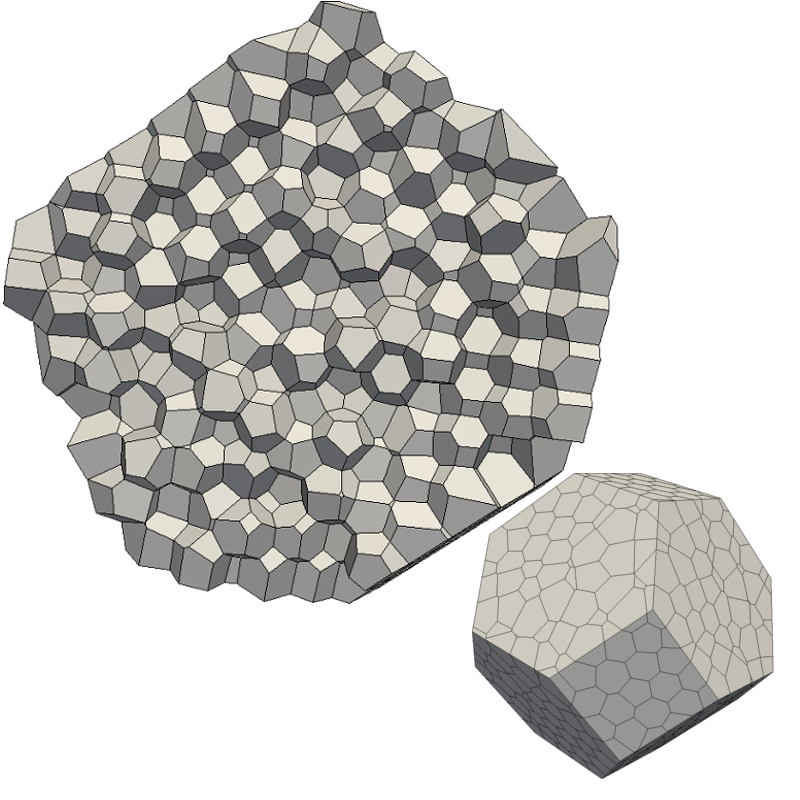} & 
\includegraphics[width=0.45\textwidth]{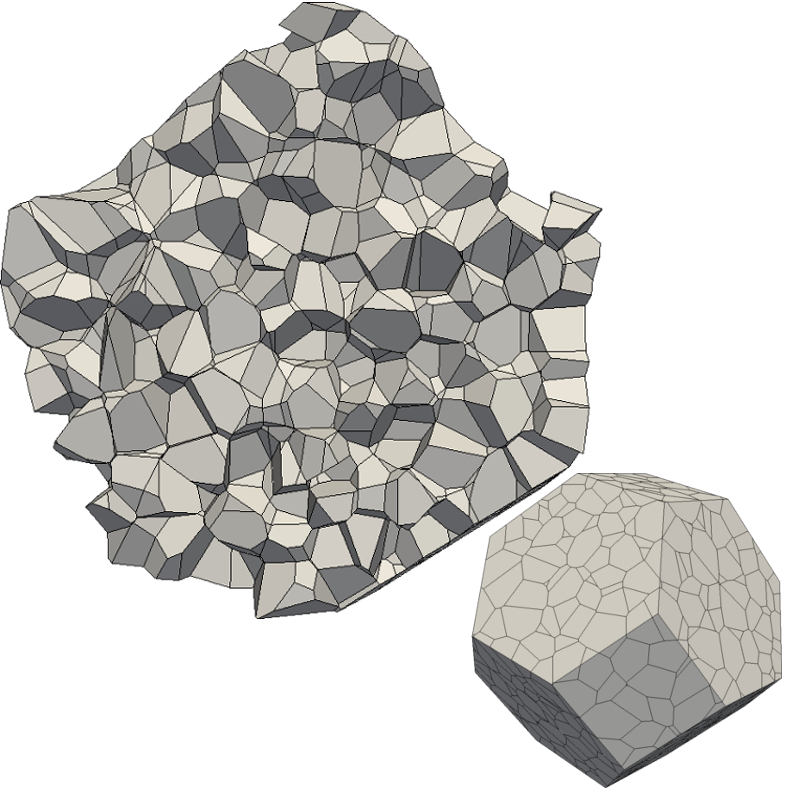}\\
(c) & (d) \\
\end{tabular}
\end{center}
\caption{\coor{Truncated octahedron geometries with Structured (a), tetrahedral (b), CVT (c) and Random (d) discretization.}}
\label{fig:pataDisc}
\end{figure}
    
In order to build such meshes we exploit the c++ library \texttt{voro++}~\cite{voroPlusPlus} and
follow the strategies described in~\cite{apollo11}, 
\coor{while for tetrahedral meshes we use \texttt{tetgen}~\cite{tetgen}.}
We construct a sequence of meshes for each type and 
we define the mesh-size as 
$$
h := \frac{1}{n_P}\sum_{P\in\Omega_h} h_P\,,
$$
where $n_P$ is the number of polyhedrons $\Omega_h$.

We underline that the \textbf{Random} partitions are particularly interesting from the computational point of view. 
Indeed, such meshes contains small edges/faces and stretched polyhedrons so 
the robustness of the virtual element method will be severely tested.

\subsection{Error norms}

Suppose that $u$ is the exact solution of the partial differential equation we are solving and 
let $u_h$ be the discrete solution provided by VEM. 
We consider the following error quantities:
\begin{itemize}
 \item \textbf{$H^2$-seminorm relative error:} 
 $$
 e_{H^2} := \frac{1}{|u|_{2,\Omega}}\left(\sum_{P\in\Omega}^{n_P} |u-\Pi^\Delta_P u_h|_{2,P}^2\right)^{1/2}\,,
 $$
 where $\Pi^\Delta_P$ is the $\Delta-$projection operator defined in Equation~\eqref{eqn:piDelta};
 % ----------
 \item \textbf{$H^1$-seminorm and $L^2$-norm relative errors:} 
 \begin{eqnarray*}
 e_{H^1} &:=& \frac{1}{|u|_{1,\Omega}} \left(\sum_{P\in\Omega}^{n_P} |u-\Pi^\nabla_P u_h|_{1,P}^2\right)^{1/2} \, ,\\[1em]
 \qquad e_{L^2} &:=& \frac{1}{||u||_{2,\Omega}}\left(\sum_{P\in\Omega}^{n_P} ||u-\Pi^0_P u_h||_{2,P}^2\right)^{1/2}\,, 
 \end{eqnarray*}
where $\Pi^\nabla_P$ and $\Pi^0_P$ are the operators defined in Equations~\eqref{eqn:piNabla}
and~\eqref{eqn:piZero}, respectively;
 % ----------
% \item \textbf{$H^1$-seminorm error:} 
% $$
% e_{H^1} := \sqrt{\sum_{P\in\Omega}^{n_P} |u-\Pi^\nabla_P u_h|_{H^1(P)}^2}\,,
% $$
% where $\Pi^\nabla$ is the $H^1-$projection operator defined in Equation~\eqref{eqn:piNabla}
% and $|\cdot|_{H^1(P)}$ is the standard $H^1$-seminorm.
% \item \textbf{$L^2$-norm error:}
% $$
% e_{L^2} := \sqrt{\sum_{P\in\Omega}^{n_P} ||u-\Pi^0_P u_h||_{L^2(P)}^2}\,,
% $$
% where $\Pi^0$ is the $L^2-$projection operator defined in Equation~\eqref{eqn:piZero}
% and $||\cdot||_{L^2(P)}$ is the standard $L^2$-norm;
% ----------
 \item \textbf{$l^\infty$-type relative error:} 
 We consider the $l^\infty$-type error for functions and gradients 
 \begin{eqnarray*}
 e_{l^\infty} &:=& \frac{\max_{\nu \in \Omega_h}|u(\nu)-u_h(\nu)|}{\max_{\nu \in \Omega_h}|u(\nu)|}\\[1em]
 e_{l^\infty}^\nabla &:=& \frac{\max_{\nu \in \Omega_h}||\nabla u(\nu) - \nabla u_h(\nu)||_\infty}{\max_{\nu \in \Omega_h}||\nabla u(\nu)||_\infty}\,,  
 \end{eqnarray*}

 where $||\cdot||_\infty$ is the standard $l^\infty$ norm of three dimensional vectors.
 
\end{itemize}

\subsection{Numerical experiments}

In the following three \coor{Section}s we develop three different experiments to validate the proposed method.
First of all we show a convergence analysis of the method, \coor{Section}~\ref{sub:hConv}.
Then, we analyze different choices of the scaling parameter $h_\nu$, \coor{Section}~\ref{sub:sensitivity}.
Finally we compare this method with the standard $C^0$ VEM approach proposed in~\cite{apollo11}, 
\coor{Section}~\ref{sub:c0Comp}.

\coor{The numerical scheme was developed inside the \texttt{vem++} library, a \texttt{c++} code built at the University of Milano - Bicocca during the CAVE project \\ (https://sites.google.com/view/vembic/home).}

\subsection{Example 1: Bi-Laplacian with reaction, $h-$convergence analysis}\label{sub:hConv}

Let $\Omega$ be the truncated octahedron,
we consider the following partial differential equation 
\begin{equation}
\left\{
\begin{array}{rlll}
\Delta^2 u + u &=&\, f\quad\quad&\textnormal{in }\Omega\\
u &=&\,g_1 \quad\quad&\textnormal{on }\partial\Omega\\
\partial_n u &=&\,g_2  \quad\quad&\textnormal{on }\partial\Omega \\
\end{array}
\right.,
\label{eqn:reacDiff}
\end{equation}
the right hand side $f$ and the Dirichlet boundary conditions $g_1$ and $g_2$ are chosen in such a way that the 
solution of Equation~\eqref{eqn:reacDiff} is 
$$
u(x,\,y,\,z):= \sin(\pi xyz)\,.
$$
In the present test we take as parameter $h_\nu$ the mean value of the diameters of all polyhedrons 
which share the mesh vertex $\nu$, 
a quite natural choice, see \coor{Section}~\ref{sub:genSpace}.

In Figure~\ref{fig:Ese1} we show the convergence lines for the errors in the $H^2$-seminorm and $L^2$-norm.
The trend of the $H^2$-seminorm error is the expected one, see Theorem~\ref{teo:estConv}, 
i.e. it is approximately of order 1.
The convergence lines of the each type of mesh are close to each other.

\begin{figure}[!htb]
\begin{center}
\begin{tabular}{cc}
\includegraphics[width=0.45\textwidth]{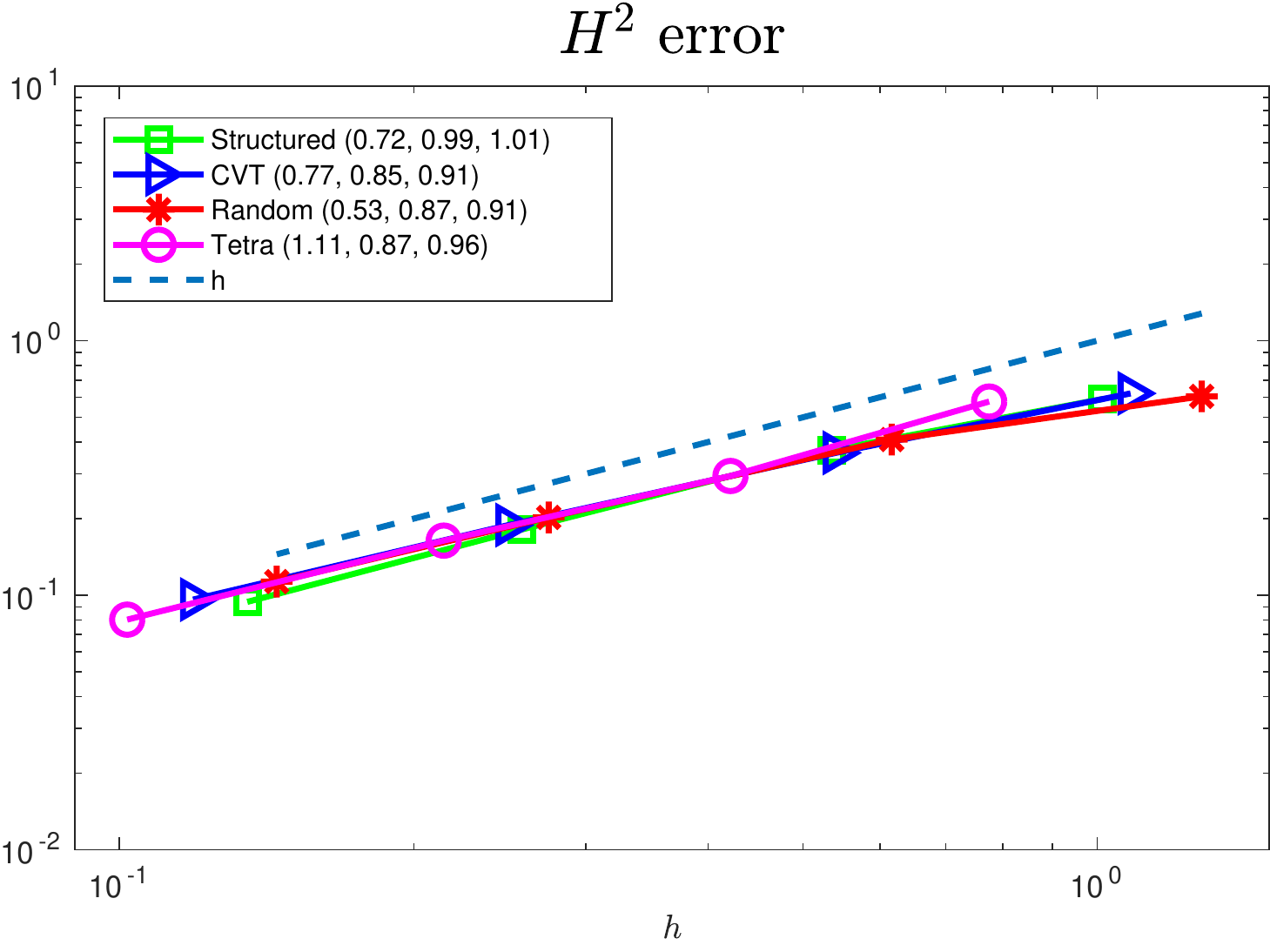} & 
\includegraphics[width=0.45\textwidth]{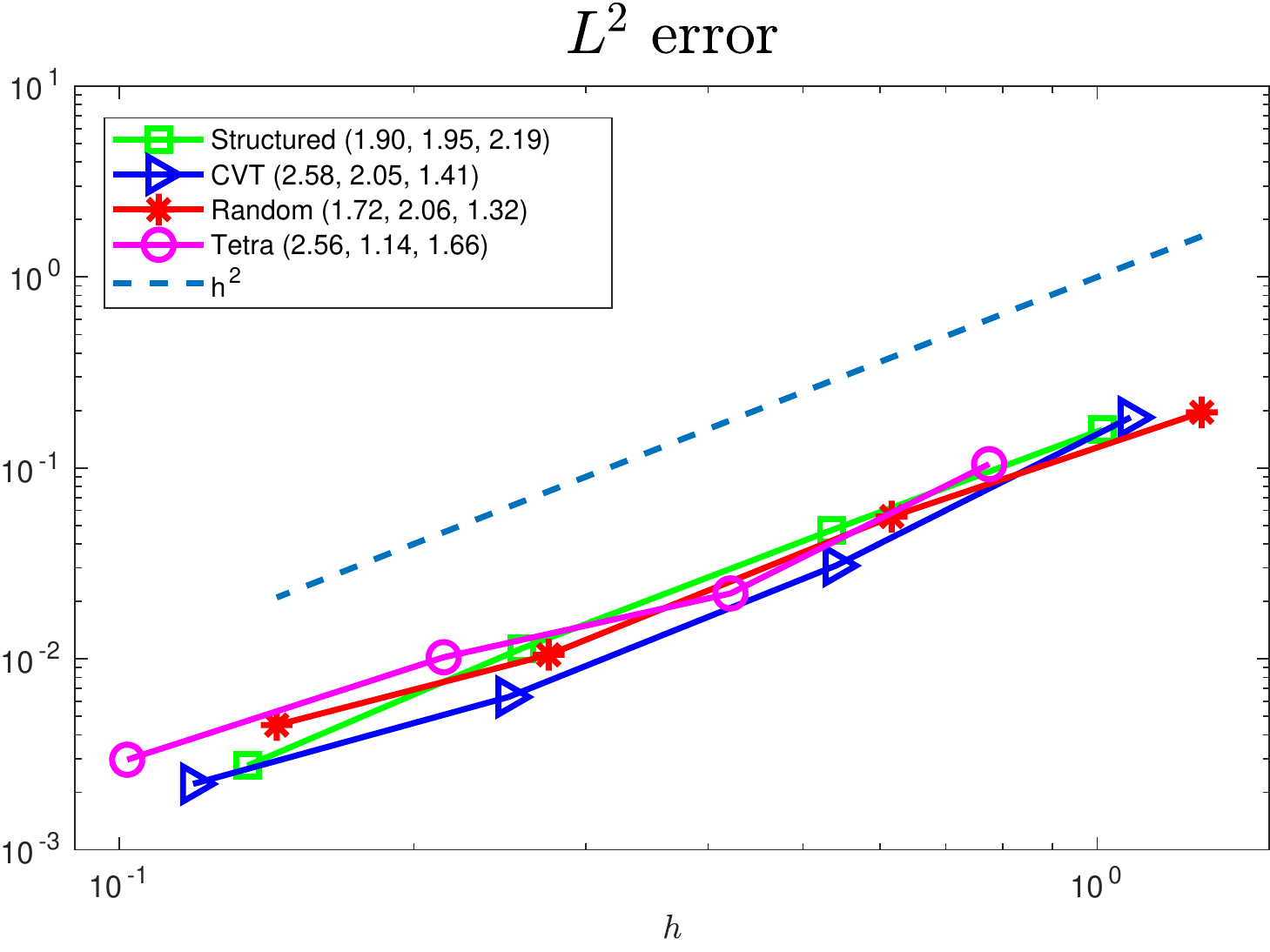}   \\
\end{tabular}
\end{center}
\caption{\coor{Example 1: convergence lines of $e_{H^2}$ (left) and $e_{L^2}$ (right) for the \textbf{Structured}, \textbf{Tetra}, \textbf{CVT} and \textbf{Random}.
In the legend we report the \coor{convergence order} at each step.}}
\label{fig:Ese1}
\end{figure}

In Figure~\ref{fig:Ese1LInfty} we show the trend of the errors $e_{l^\infty}$ and $e_{l^\infty}^\nabla$.
We did not derive a theoretical proof about the trend of such errors, but we can empirically deduce from these graphs that 
the convergence rate of $e_{l^\infty}$ is between 2 and 3 
while the one of $e_{l^\infty}^\nabla$ is 2.

\begin{figure}[!htb]
\begin{center}
\begin{tabular}{cc}
\includegraphics[width=0.45\textwidth]{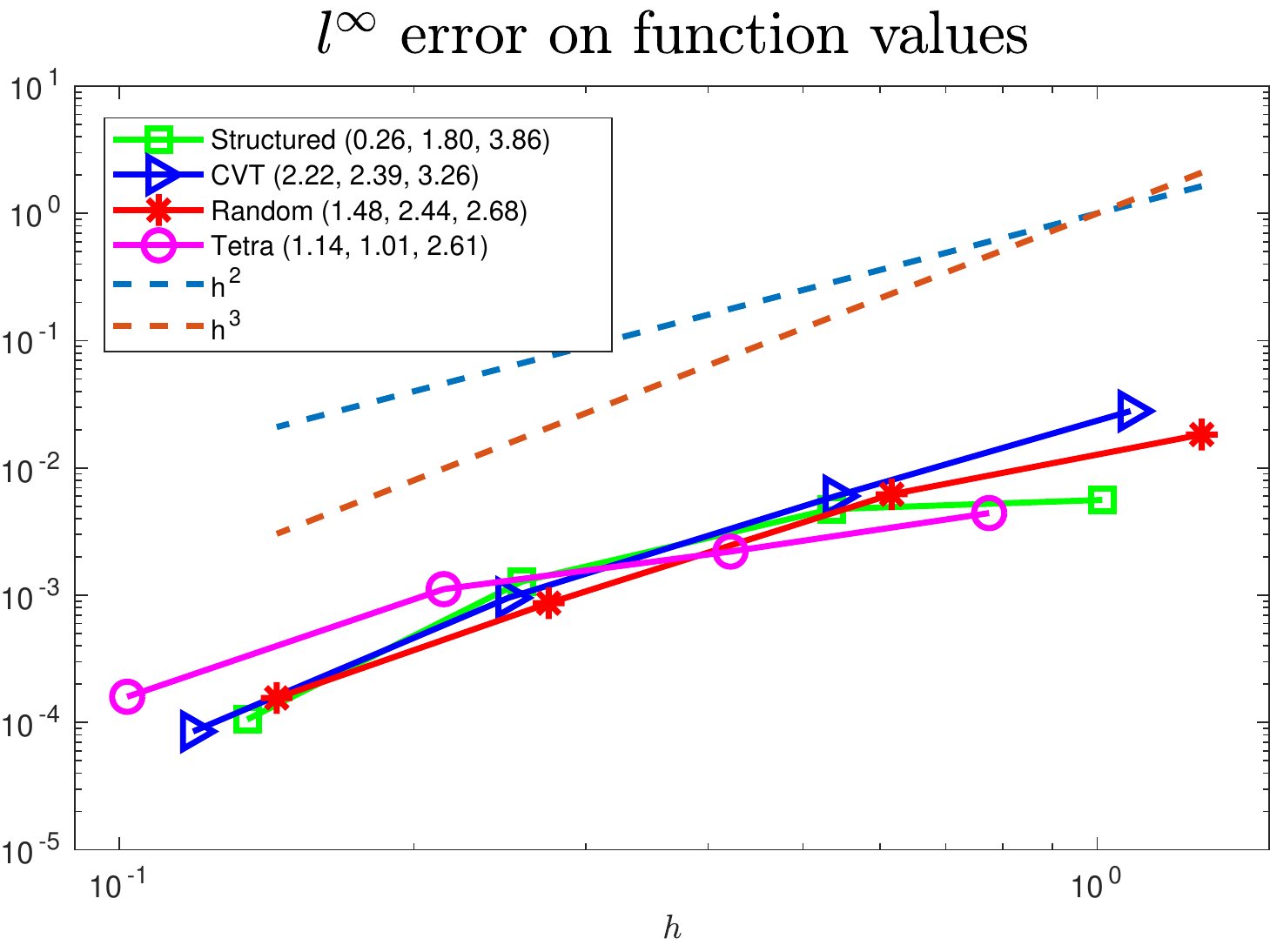} & 
\includegraphics[width=0.45\textwidth]{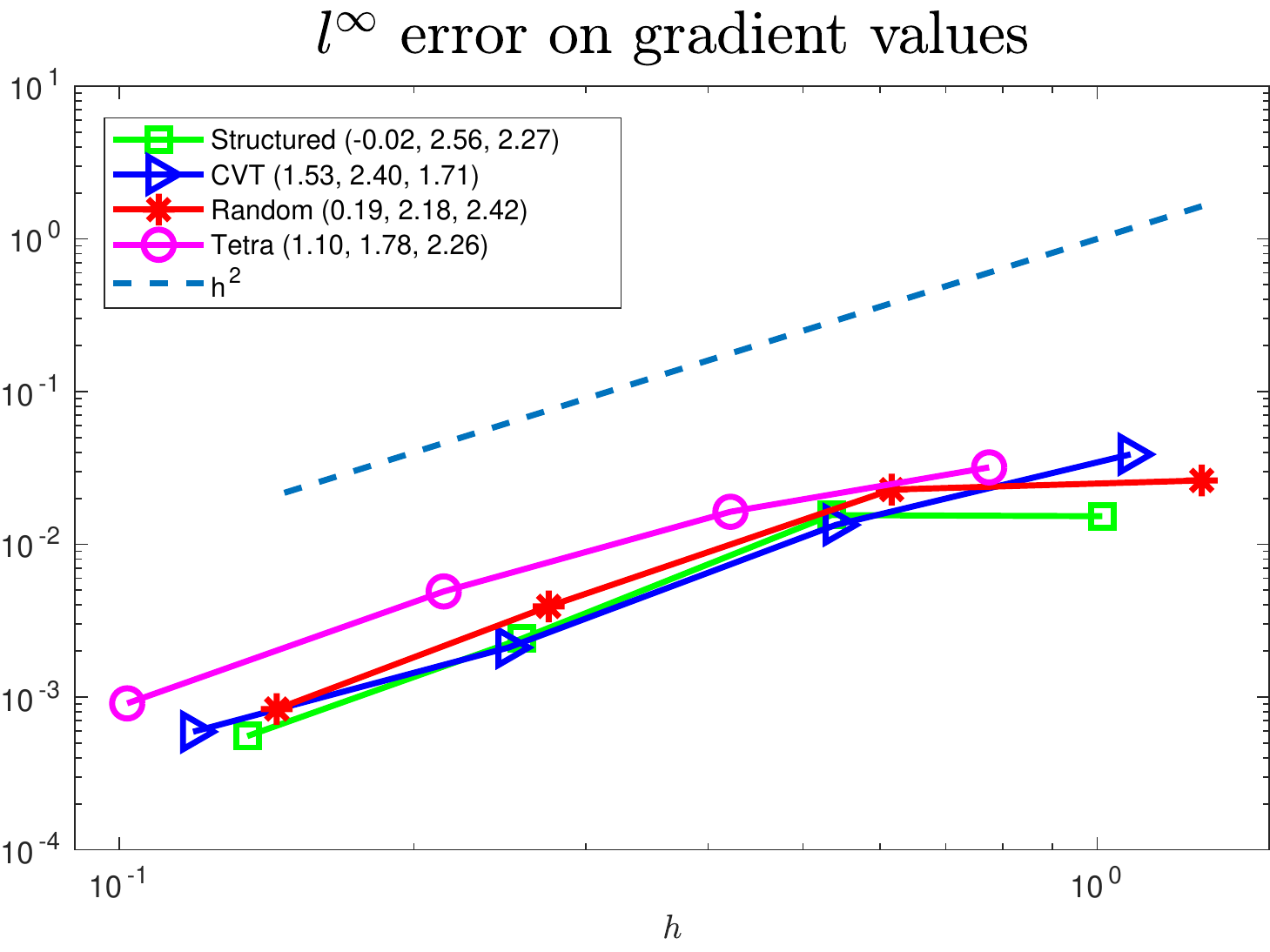}   \\
\end{tabular}
\end{center}
\caption{\coor{Example 1: convergence lines of $e_{l^\infty}$ (left) and $e_{l^\infty}^\nabla$ (right) for the \textbf{Structured}, \textbf{Tetra}, \textbf{CVT} and \textbf{Random}. 
In the legend we report the \coor{convergence order} at each step.}}
\label{fig:Ese1LInfty}
\end{figure}

\subsection{Example 2: analysis on $h_\nu$}\label{sub:sensitivity}

In the present section we investigate different choices of the ``local mesh size'' scaling parameter $h_\nu$ 
introduced in \coor{Section}~\ref{sub:genSpace}, see in particular Equation~\eqref{eqn:stab}.
We consider the following problem
\begin{equation}
\left\{
\begin{array}{rlll}
\Delta^2 u &=&\, f\quad\quad&\textnormal{in }\Omega\\[0.5em]
u &=&\,g_1 \quad\quad&\textnormal{on }\partial\Omega\backslash\Gamma\\
\partial_n u &=&\,g_2  \quad\quad&\textnormal{on }\partial\Omega\backslash\Gamma \\[0.5em]
\Delta u &=& 0 \quad\quad& \textnormal{on }\Gamma\\
-\partial_n\Delta u &=& 0 \quad\quad&\textnormal{on }\Gamma \\
\end{array}
\right.,
\label{eqn:neudiri}
\end{equation}
where $\Omega$ is the standard unit cube, 
$\Gamma$ are the faces associated with the planes $x=0$ and $x=1$,
where we apply homogeneous Neumann boundary conditions,
$f$, $g_1$ and $g_2$ are chosen in such a way that the exact solution is the function 
$$
u(x,\,y,\,z) = \frac{1}{12}\,x^4y\,z\,.
$$

Before showing the numerical results,
we explain the choices we made for the scaling parameter $h_\nu$.
Given a vertex $\nu$ of a mesh, we temporary use the following labels to denote these three collections of diameters:
\begin{itemize}
 \item $h_P$ the diameter of all polyhedrons sharing $\nu$;
 \item $h_f$ the diameter of all faces sharing $\nu$;
 \item $h_e$ the diameter of all edges whose endpoint is $\nu$.
\end{itemize}
Then we can take the mean, the maximum or the minimum of these set of diameters
to associate with $\nu$ a unique scalar value $h_\nu$.
These operations give a total of $3\times 3=9$ possible choices for $h_\nu$.
For instance, the label $\max\,h_f$ means that we take as $h_\nu$
the maximum ($\max$) diameter among all the faces ($h_f$) sharing $\nu$.

We take into account only the set of \textbf{CVT} and \textbf{Random} meshes,
because in a structured mesh these choices of $h_\nu$ are really close to each other.

In Figures~\ref{fig:ese2Voro} and~\ref{fig:ese2Rand} we show the convergence lines
for the set of \textbf{CVT} and~\textbf{Random} meshes, respectively.

The trend of the $H^2$-seminorm error is similar for each choice of the scaling parameter $h_\nu$,
indeed the convergence lines are all indistinguishable except for  the minimum $h_e$ choice, 
(exhibiting the worst behavior),
see Figures~\ref{fig:ese2Voro} and~\ref{fig:ese2Rand} left.

The behavior of the $L^2$-norm error is more sensitive with respect to the parameter $h_\nu$ but 
it preserves a similar slope of the error in all cases.
Also in this case the minimum $h_e$ presents a worse behavior with respect to all the other ones.

\begin{figure}[!htb]
\begin{center}
\begin{tabular}{cc}
\includegraphics[width=0.45\textwidth]{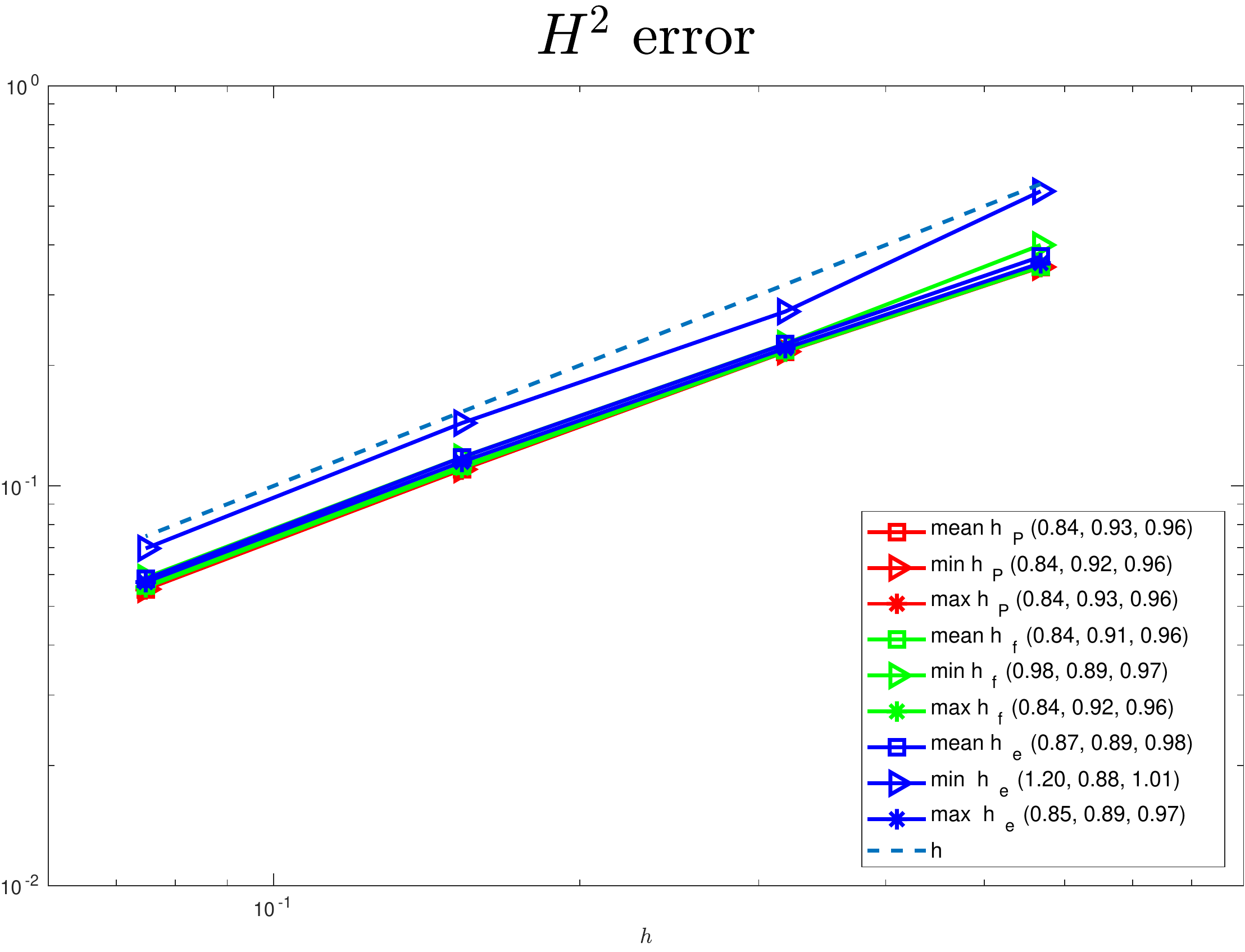} & 
\includegraphics[width=0.45\textwidth]{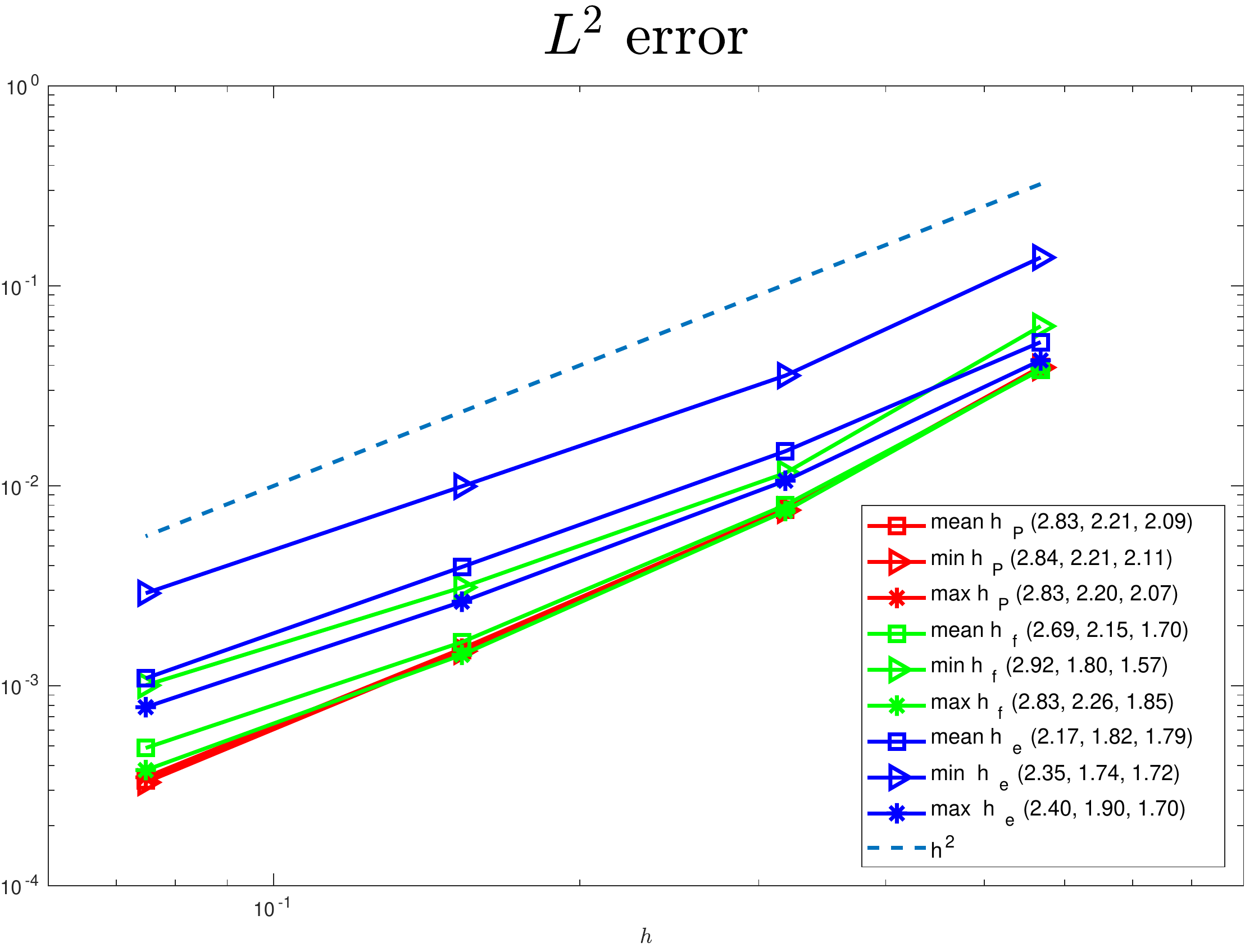}   \\
\end{tabular}
\end{center}
\caption{Example 2: trend of the $H^2$-seminorm (left) and $L^2$-norm (right) error with different scaling parameter $h_\nu$ for \textbf{CVT} meshes.
In the legend we report the \coor{convergence order} at each step.}
\label{fig:ese2Voro}
\end{figure}

\begin{figure}[!htb]
\begin{center}
\begin{tabular}{cc}
\includegraphics[width=0.45\textwidth]{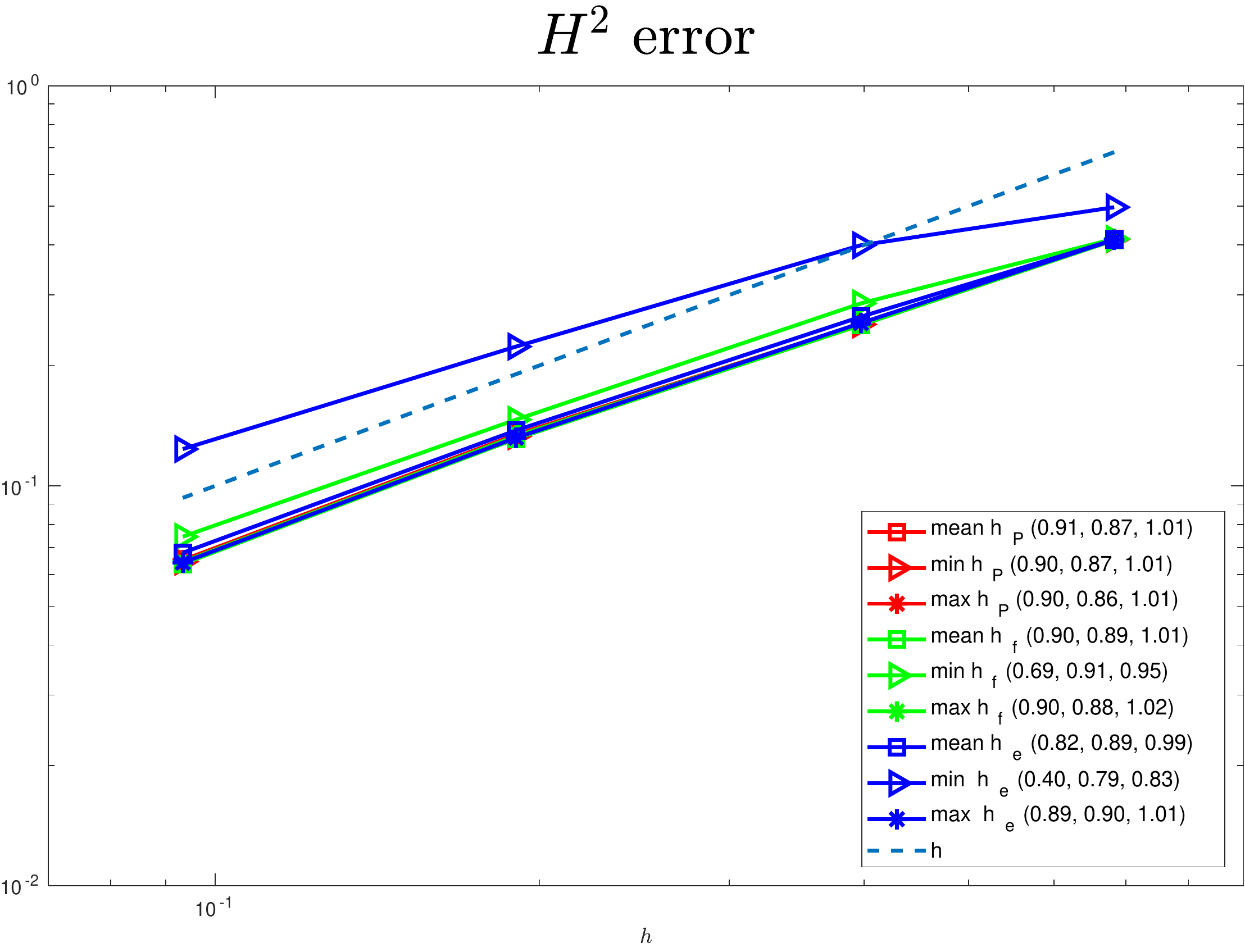} & 
\includegraphics[width=0.45\textwidth]{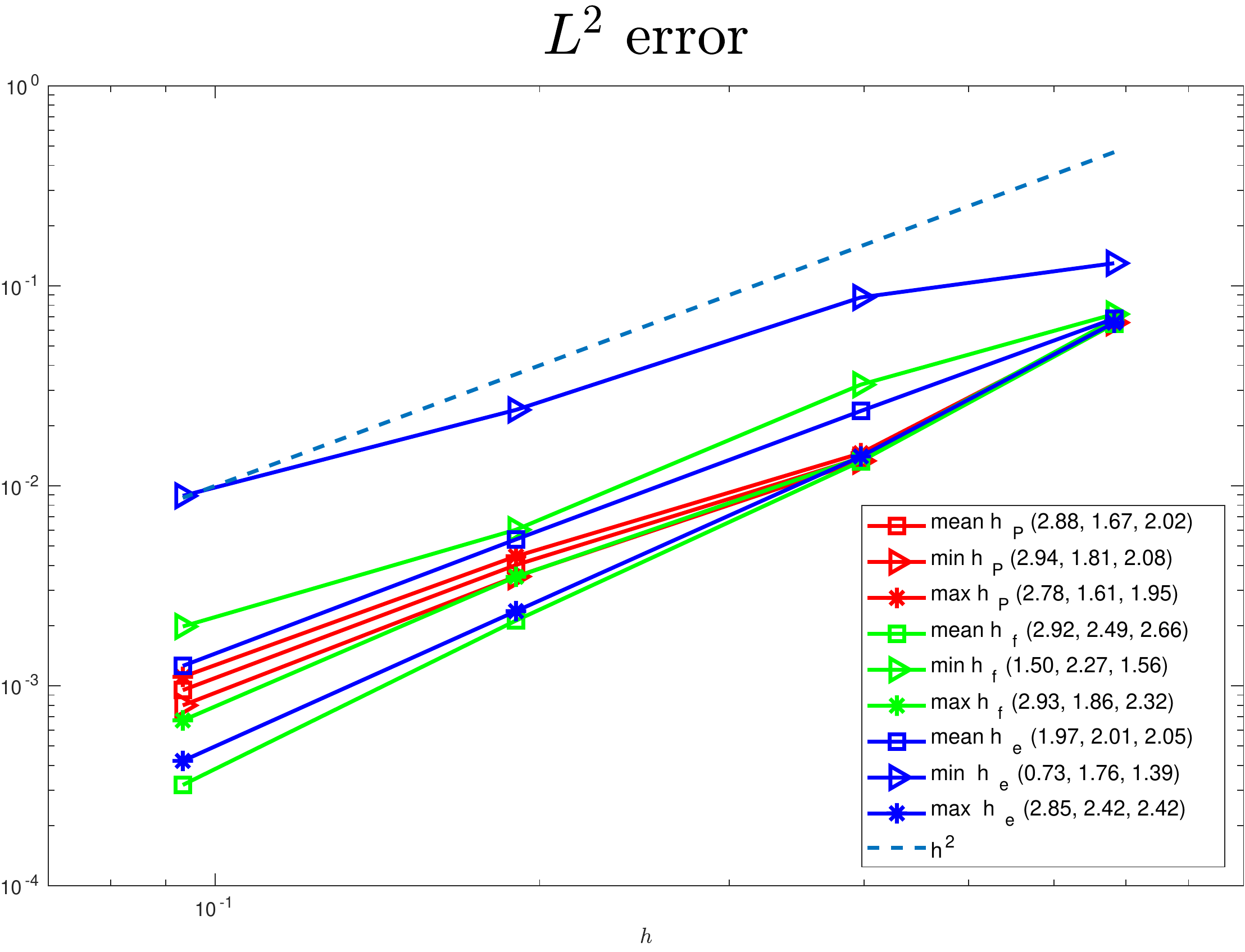}   \\
\end{tabular}
\end{center}
\caption{Example 2: trend of the $H^2$-seminorm (left) and $L^2$-norm (right) error with different scaling parameter $h_\nu$ for \textbf{Random} meshes.
In the legend we report the \coor{convergence order} at each step.}
\label{fig:ese2Rand}
\end{figure}

\subsection{Example 3: Comparison with $C^0$ VEM approach}\label{sub:c0Comp}

In this section we consider the same test of \coor{Section} 3.3
in~\cite{apollo11}.
We take the truncated octahedron as domain and
we solve the following second order partial differential equation
\begin{equation}
\left\{
\begin{array}{rlll}
-\Delta u + u &=&\, f\quad\quad&\textnormal{in }\Omega\\
u &=&\,g_1 \quad\quad&\textnormal{on }\partial\Omega\\
\end{array}
\right.,
\label{eqn:reacDiffC1C0}
\end{equation}
and we choose the right hand side $f$ and $g_1$ in such a way that the exact solution is 
$$
u(x,\,y,\,z):=\sin(2xy)\,\cos(z)\,.
$$

We consider three ``different'' $C^0$ VEM schemes  of degree 2 in 3D which differ in the number of degrees of freedom.
More specifically, given a mesh composed by $n_P$ polyhedrons, $n_f$ faces, $n_e$ edges and $n_\nu$ vertices,
we consider the following choices of $C^0$ or $C^1$ VEM approaches
\begin{itemize}
 \item \textbf{c1}: the $C^1$ method proposed in this paper,
 $$
 \#dofs=4 n_\nu\,,
 $$
 \item \textbf{c0}: a standard $C^0$ VEM~\cite{apollo11},
 $$
 \#dofs=n_\nu+n_e+n_f+n_P\,,
 $$
 \item \textbf{c0 static cond}: a standard $C^0$ VEM with static condensation of the internal-to-element degrees of freedom,
 $$
 \#dofs=n_\nu+n_e+n_f\,,
 $$
 \item \textbf{c0 sere}: a serendipity $C^0$ VEM with static condensation~\cite{chinaSere},
 $$
 \#dofs=n_\nu+n_e\,.
 $$
\end{itemize}
The mesh-size parameter behaves as $h\sim \# dofs^{-1/3}$
so the theoretical slope in a $\# dof$ vs $H^1$-seminorm error graph is expected to be $-2/3$ in all cases 
(that corresponds to $O(h^2)$ convergence rate).

In Figure~\ref{fig:ese3H1} we show the convergence lines of the $H^1$-seminorm error
for the set of meshes $\textbf{CVT}$ and $\textbf{Random}$ in terms of number of degrees of freedom.
From these convergence lines, we numerically show that all these methods have the expected error trend.
Moreover, we observe that the error values at each step are close to each other for all methods.

We also observe that 
for a given level of accuracy, 
the number of degrees of freedom to get a $C^1$ 
solution is approximately the same as for a $C^0$ scheme;
clearly, the number of degrees of freedom is only a rough 
indicator of the overall computational cost.

\begin{figure}[!htb]
\begin{center}
\begin{tabular}{cc}
\includegraphics[width=0.45\textwidth]{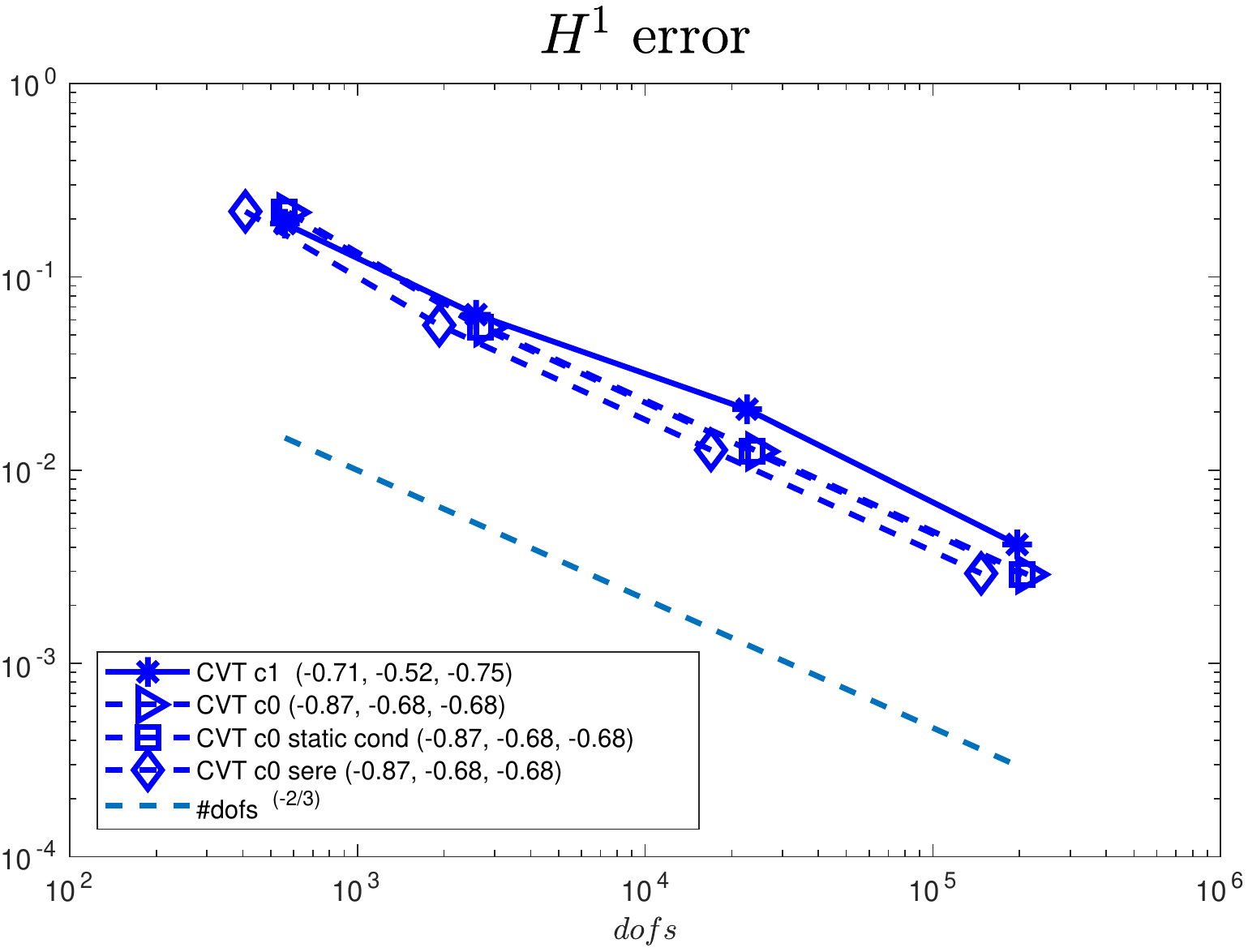} & 
\includegraphics[width=0.45\textwidth]{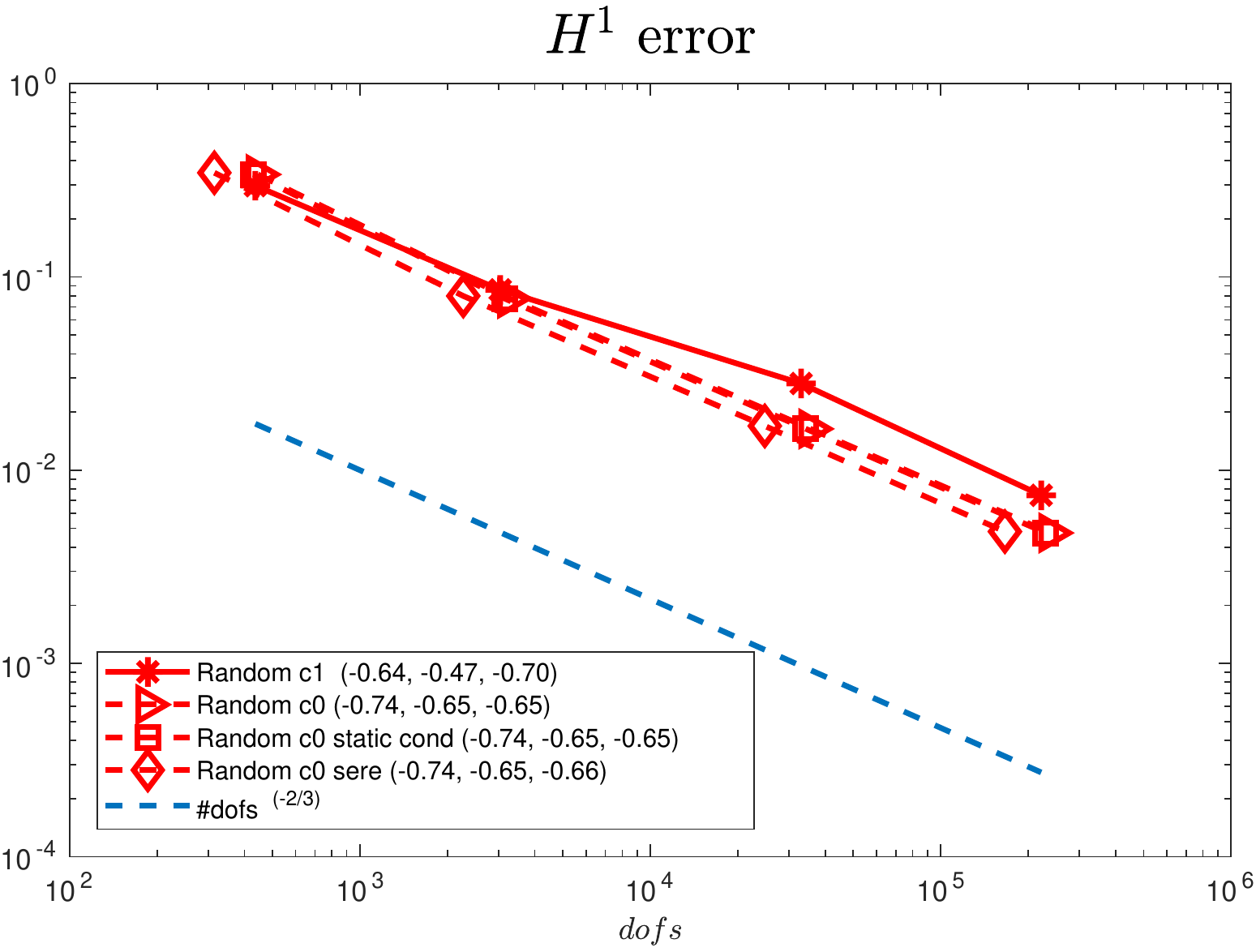}   \\
\end{tabular}
\end{center}
\caption{Example 3: trend of the $H^1$-seminorm error with \textbf{CVT} (left) and \textbf{Random} (right) meshes.
In the legend we report the \coor{convergence order} at each step.}
\label{fig:ese3H1}
\end{figure}

\coor{As a final comparison, we compare the running times (in seconds) for the $C^1$ scheme and the $C^0$ schemes (original and Serendipity version). The considered problem is~\eqref{eqn:reacDiffC1C0} and the adopted meshes are the CVT and random Voronoi. The results are reported in Table~\ref{tab:exe3Time}, where we present both the the assembling and solving time.
The code is runned in serial on a Linux machine with processor 
Intel(R) Xeon(R) CPU E5-2637 v4 @ 3.50GHz. The solution of the linear system is obtained via the direct solver provided by Pardiso~\cite{pardiso}.
We underline that the involved times could be reduced by running the whole code (both assembling and solving) in parallel, but this is beyond the scope of the present work.

From such data we observe that, as expected, both the assembling and solving time of $\textbf{c1}$ is sensibly greater than the other types. Indeed the $C^1$ scheme involves a more complex structure in terms of degrees of freedom and projection operators, in addition to having slightly more DoFs. On the other hand, this is only a rough time comparison based on our current C++ code; optimizing the codes could lead to smaller differences.
We also remind that the purpose of the $C^1$ scheme is on fourth order problems, and here we are only checking its performance on second order problems (for which it seems a viable, but not optimal, choice). 

%Finally, a side observation. From Table~\ref{tab:exe3Time} we also observe that the approach \textbf{c0 sere} provides the best timing result, expecially in terms of solving time. Recalling that the serendipity approach removes \emph{only} one degree of freedom for each face when we are considering $k=2$, this improvement is quite significant.

\begin{table}[!htb]
\coor{
\begin{center}
\begin{tabular}{|c|rr|rr|rr|}
\cline{2-7}
\multicolumn{1}{c}{} &
\multicolumn{2}{|c|}{\textbf{c1}} &
\multicolumn{2}{ c|}{\textbf{c0}} & 
\multicolumn{2}{ c|}{\textbf{c0 sere}}\\
\cline{2-7}
\multicolumn{1}{c|}{} &
$T_{ass}$ &$T_{sol}$ &$T_{ass}$ &$T_{sol}$  &$T_{ass}$ &$T_{sol}$ \\
\hline
\textbf{CVT} 1 &1    &$\approx$ 0  &$\approx$ 0 &$\approx$ 0  &$\approx$ 0 &$\approx$ 0 \\
\textbf{CVT} 2 &5    &$\approx$ 0  &1           &$\approx$ 0  &$\approx$ 0 &$\approx$ 0  \\
\textbf{CVT} 3 &8    &160          &5           &3            &5           &2  \\
\textbf{CVT} 4 &1695 &854          &49          &208          &44          &98 \\
\hline
\end{tabular}
\end{center}
\begin{center}
\begin{tabular}{|c|rr|rr|rr|}
\cline{2-7}
\multicolumn{1}{ c|}{} &
\multicolumn{2}{ c|}{\textbf{c1}} &
\multicolumn{2}{ c|}{\textbf{c0}} & 
\multicolumn{2}{ c|}{\textbf{c0 sere}}\\
\cline{2-7}
\multicolumn{1}{c|}{} &
$T_{ass}$ &$T_{sol}$ &$T_{ass}$ &$T_{sol}$ &$T_{ass}$ &$T_{sol}$ \\
\hline
\textbf{Random} 1 &$\approx$ 0 &1               &$\approx$ 0 &$\approx$ 0 &$\approx$ 0 &$\approx$ 0  \\
\textbf{Random} 2 &7           &1               &1           &$\approx$ 0 &$\approx$ 0 &$\approx$ 0  \\
\textbf{Random} 3 &74          &11              &8           &8           &7           &4            \\
\textbf{Random} 4 &884         &641             &59          &325         &52          &145          \\
\hline
\end{tabular}
\end{center}
\caption{Example 3, time comparison among $C^1$ and $C^0$ VEM. Assembling time in seconds, $T_{ass}$, and solving time in seconds, $T_{sol}$, for different meshes of the CVT and Random Voronoi families.}
\label{tab:exe3Time}}
\end{table}}

\section*{Acknowledgments}

The first and second authors were partially supported by the European Research Council through the H2020 Consolidator Grant (grant no. 681162) CAVE~--~Challenges and Advancements in Virtual Elements. This support is gratefully acknowledged. 

%%%%%%%%%%%%%%%%%%%%%%%%%%%%%%%%%%%%%%%%%%%%%%%%%%%%%
%\input{generalK}
%%%%%%%%%%%%%%%%%%%%%%%%%%%%%%%%%%%%%%%%%%%%%%%%%%%%%

\appendix
\section{A glimpse to the general order case}\label{appendix:general}

In this appendix we give an hint on the general order case $k>2$,
the lowest order case presented in the paper corresponds to $k=2$.
We limit ourself to the definition of the local face and polyhedral spaces, i.e.
we consider the same work-flow of Section~\ref{sec:spaces} but
omit the proof of these results which can be deduced from the lowest order case.

Starting from the functional spaces defined in the following paragraphs,
the generalization of the discrete forms defined in Equation~\eqref{eqn:discLocalForm} becomes 
technical but straightforward.

To define such functional spaces, 
we define the polynomial space $\mathbb{P}_s\backslash\mathbb{P}_r(\mathcal{D})$ 
as any complement space of $\mathbb{P}_r(\mathcal{D})$, i.e. that verifies 
$$
\mathbb{P}_s(\mathcal{D}) = (\mathbb{P}_s\backslash\mathbb{P}_r(\mathcal{D}))\oplus\,\mathbb{P}_r(\mathcal{D})\,,
$$
where $\mathcal{D}$ is a generic two or three dimensional domain and the integers $s>r\geq 0$.

\vspace{1em}
\noindent\textbf{Virtual element nodal space $\VFNGen$}
\vspace{1em}

\noindent We start from the preliminary space
\begin{eqnarray*}
\VFNTGen := \bigg\{v_h\in H^1(f)\:&:&\Delta_\tau\,v_h\in\mathbb{P}_{k-2}(f)\,, \\[-1em]
\phantom{\VFNTGen := \bigg\{v_h\in H^1(f)\:}&\phantom{:}&
v_h|_{\partial f}\in C^0(\partial f)\,,\:v_h|_e\in\mathbb{P}_{k-1}(e)\:\:\forall e\in\partial f\bigg\}\,. 
\end{eqnarray*}

We build the projection operator $\Pi^\nabla_{f,k}:\VFNTGen\to\mathbb{P}_{k-1}(f)$,
defined in a similar way as in Equation~\eqref{eqn:piNablaFace}, and the functional space

\begin{equation*}
\VFNGen := \left\{ v_h\in\VFNGen\::\:\int_f\Pi^\nabla_{f,k} v_h\,q\df = \int_f v_h\,q\df,
\quad \forall q\in\mathbb{P}_{k-2}\backslash\mathbb{P}_{k-3}(f)\right\}\,.
\label{eqn:nodalEnhancedGen}
\end{equation*}

The degrees of freedom of such space are the standard nodal VEM ones and 
they are enough to define $\Pi^\nabla_{f,k}$.
This virtual element space is standard in the virtual element community,
we refer to~\cite{volley} for more details.

\vspace{1em}
\noindent\textbf{Virtual element $C^1$ space $\VFGen$}
\vspace{1em}

\noindent We generalize the preliminary space defined in Subsection~\ref{sub:VSOnFace2} as
\begin{eqnarray*}
\VFTGen := \Bigg\{v_h\in H^2(f)\:&:&\: \Delta^2_\tau\,v_h\in\mathbb{P}_{k-1}(f),\\[-1em]
\:&\phantom{:}&\: v_h|_{\partial f}\in C^0(\partial f),\:\:v_h|_e\in\mathbb{P}_k(e)\:\:\forall e\in\partial f\,, \\
\:&\phantom{:}&\: \nabla_\tau\,v_h|_{\partial f}\in [C^0(\partial f)]^2,\:\:\\[-1em]
\:&\phantom{:}&\: \partial_{n_e} v_h\in\mathbb{P}_{k-1}(e)\:\:\forall e\in\partial f\Bigg\}\,,
\end{eqnarray*}  

Starting from the projection operator $\Pi^\Delta_{f,k}:\VFTGen\to\mathbb{P}_k(f)$ defined in a similar way as
$\Pi^\Delta_f$, see Equation~\eqref{eqn:piDeltaFace},
we are able to define the virtual space
\begin{equation*}
\VFGen:= \left\{ v_h\in\VFTGen\::\:\int_f \Pi^\Delta_{f,k} v_h\,\,q\df = \int_f v_h\,\,q\df,
\quad\forall q\in\mathbb{P}_{k-1}\backslash\mathbb{P}_{k-4}(f)\right\}\,,
\label{eqn:c1EnhancedGen} 
\end{equation*}

Also in this case the degrees of freedom are enough to define $\Pi^\Delta_{f,k}$.
This space face is similar to the one defined in~\cite{BM13} and we refer to this paper for more details.

Moreover, it is easy to show that starting from the degrees of freedom of $\VFGen$,
it is possible to define the $L^2$ projection operator to approximate the gradient of a function $v_h\in \VFGen$, i.e.
$\bPi^0_{f,k-1}$ (the counterpart of the operator defined in Equation~\eqref{eqn:defPiFGrad}).

\vspace{1em}
\noindent\textbf{General order virtual element space in $P$}
\vspace{1em}

\noindent Given a polyhedron $P\in\Omega_h$ we consider the preliminary space 
\begin{eqnarray*}
\widetilde{V}_{h,k}(P) := \bigg\{v_h\in H^2(P)\:&:&\: \Delta^2\,v_h\in\mathbb{P}_k(P), \nonumber \\[-0.3em]
\:&\phantom{:}&\: 
v_h|_{S_P}\in C^0(S_P)\,,
\nabla v_h|_{S_P}\in[C^0(S_P)]^3\,,\nonumber \\
\:&\phantom{:}&\: 
v_h|_f\in \VFGen\,,\,
\partial_{n_f} v_h|_f\in \VFNGen
\quad\forall f\in\partial P
\bigg\}\,,\nonumber \\
\end{eqnarray*}

In this virtual element space we define the following set of linear operators from $\widetilde{V}_{h,k}(P)$ to $\mathbb{R}$:
\begin{center}
\begin{minipage}{1.\textwidth}
\begin{itemize}
 \item[\DZ:] the values of the function at the vertices, $v_h(\nu)$; 
 \item[\DU:] the values of the gradient components at the vertices, $\nabla v_h(\nu)$;
 \item[\textbf{D2}:] values of $v_h(\nu)$ at $\max\{k-3,0\}$  internal points for each edge of~$\partial P$;
 \item[\textbf{D3}:] values of $\nabla v_h(\nu)$ along two normal-to-edge directions at $k-2$ internal points for each edge of~$\partial P$;
 \item[\textbf{D4}:] for each face $f\in\partial P$ we define the function moments 
 $$
 \int_f v_h\,q\df\qquad\forall q\in\mathbb{P}_{k-4}(f)\,,
 $$
 \item[\textbf{D5}:] for each face $f\in\partial P$ we define the gradient moments 
 $$
 \int_f (\nabla v_h\cdot\N_f)\,q\df\qquad\forall q\in\mathbb{P}_{k-3}(f)\,,
 $$
 \item[\textbf{D6}:] the internal function moments 
 $$
 \int_P v_h\,q\df\qquad\forall q\in\mathbb{P}_{k-4}(P)\,.
 $$
\end{itemize} 
\end{minipage}
\end{center}

Starting from these linear operators it is possible to determine the projection operator  
$\Pi^\Delta_{P,k}:\widetilde{V}_h(P)\to\mathbb{P}_k(P)$ defined in a similar way as in Equation~\eqref{eqn:piDelta}.
It is easy to see that this projection operator is uniquely defined by $\textbf{D1}-\textbf{D6}$ through
the face projectors $\Pi^\nabla_{f,k}$, $\Pi^\Delta_{f,k}$ and $\bPi^0_{f,k-1}$.
Then the $C^1$ general order virtual elements space defined on polyhedron reads
\begin{eqnarray*}
 V_h(P) &:=& \Bigg\{v_h\in \widetilde{V}_{h,k}(P)\::\: 
\mathlarger{\int_P} \Pi^\Delta_{P,k} v_h\,\,q\dPP = \mathlarger{\int_P} v_h\,\,q\dPP\,,
\quad\forall q\in\mathbb{P}_{k}\backslash\mathbb{P}_{k-4}(P)
\Bigg\}\,.
\end{eqnarray*}

%

% References 
%
\bibliographystyle{plain}
% \bibliography{../general-bibliography}
\bibliography{VEM}

\end{document}